\numberwithin{theorem}{section}
\moprm{\detrg}{detR\Gamma}
\moprm{\ir}{ir}
\moprm{\length}{length}
\moprm{\depth}{depth}
\mopit{\SPEC}{\cS pec}
\moprm{\Pic}{Pic}
\moprm{\Sym}{Sym}
\moprm{\Hilb}{Hilb}
\moprm{\HC}{HC}
\newcommand\IS{{\widetilde{\Hilb}}}
\moprm{\Flag}{Flag}
\moprm{\Heck}{Hecke}
\moprm{\coker}{coker}
\newcommand{\oPic}{\overline{\Pic}}
\moprm{\Fl}{Fl}
\newcommand\D {\mathbb{D}}
\newcommand\DC{\mathbb{D}\mathbb{C}}
\newcommand\FF{\mathfrak{F}}
\newcommand\FS{\mathfrak{FSch}}
\newcommand\GL{\mathrm{GL}}
\newcommand\Bun{\mathrm{Bun}}
\newcommand\HH{\mathbb{H}}
\moprm{\ch}{ch}
\moprm{\Td}{Td}
\newcommand\frj{{\mathfrak{j}}}
\newenvironment{myenum}{\setcounter{enumi}{0}\def\item{\par\refstepcounter{enumi}(\theenumi) }}{}
\newcommand\kkk{{\Bbbk}}
\begin{document}
\title[Autoduality of compactified Jacobians]{Autoduality of compactified Jacobians for curves with plane singularities}
\author{Dima Arinkin}
\address{Department of mathematics, University of North Carolina, Chapel Hill, NC}
\email{arinkin@email.unc.edu}
\begin{abstract} Let $C$ be an integral projective curve with planar singularities. Consider its Jacobian
$J$ and the compactified Jacobian $\oJ$. 
We construct a flat family $\oP$ of Cohen-Macaulay sheaves on $\oJ$ parametrized by $\oJ$; 
its restriction to $J\times\oJ$ is the Poincar\'e line bundle. We prove that the Fourier-Mukai transform given by
$\oP$ is an auto-equivalence of the derived category of $\oJ$.
\end{abstract}

\maketitle
\section*{Introduction}
Let $C$ be a smooth irreducible projective curve over a field $\kkk$, and let $J$ be the Jacobian of $C$. 
As an abelian variety, $J$ is self-dual.
More precisely, $J\times J$ carries a natural line bundle (the Poincar\'e bundle) $P$
that is universal as a family of topologically trivial line bundles on $J$.

The Poincar\'e bundle defines the Fourier-Mukai functor
$${\mathfrak F}:D^b(J)\to D^b(J):\cG\mapsto Rp_{2,*}(p_1^*(\cG)\otimes P).$$
Here $D^b(J)$ is the bounded
derived category of quasi-coherent sheaves on $J$ and $p_{1,2}:J\times J\to J$ are the projections.
Mukai proved that $\mathfrak F$ is an equivalence of categories (\cite{Mukai}).

Now suppose that $C$ is a singular curve, assumed to be projective and integral.
The Jacobian $J$ is no longer projective, but it admits a natural compactification $\oJ\supset J$ (\cite{CJ,irreducibility}). By definition,
$\oJ$ is the moduli space of torsion-free sheaves $F$ on $C$ such that $F$
has generic rank one and $\chi(F)=\chi(O_C)$;
$J$ is identified with the open subset of locally free sheaves.
It is natural to ask whether $\oJ$ is self-dual.

In this paper, we prove such self-duality assuming that $C$ is an integral projective curve with planar singularities
over a field $\kkk$ of characteristic zero. We construct a Poincar\'e sheaf $\oP$ on $\oJ\times\oJ$. The sheaf is
flat over each copy of $\oJ$; we can therefore view it as a $\oJ$-family of sheaves on $\oJ$. We prove that this
family is universal in the sense that it identifies $\oJ$ with a connected component of the moduli space of
torsion-free sheaves of generic rank one on $\oJ$. This generalizes autoduality results of \cite{autoduality,compactified,Ar}
and answers the question posed in \cite{compactified}.
We also prove that the corresponding Fourier-Mukai functor 
\[
{\mathfrak F}:D^b(\oJ)\to D^b(\oJ):\cG\mapsto Rp_{2,*}(p_1^*(\cG)\otimes \oP)
\]
is an equivalence.

\begin{remarks*}
\begin{myenum}
\item If $C$ is a plane cubic (nodal or cuspidal), these results are known: see the remark after 
Theorem~\ref{th:FM}.

\item It is easy to write a formula for the Poincar\'e line bundle $P$ on $J\times\oJ$: see \eqref{eq:Poincare}.
Our result is thus a construction of an extension of $P$ to a sheaf on $\oJ\times\oJ$ satisfying certain natural
properties.

We studied $P$ in \cite{Ar}. There, we prove weaker versions of the results of the present paper:
that $P$ is a universal family of topologically trivial line bundles on $\oJ$, and that the corresponding
Fourier-Mukai functor
\[{\mathfrak F}_J:D^b(J)\to D^b(\oJ)\]
is fully faithful. Note that 
${\mathfrak F}_J={\mathfrak F}\circ Rj_*$, where $j:J\hookrightarrow\oJ$ is the
open embedding.  

\item
Compactified Jacobians appear as (singular)
fibers of the Hitchin fibration for group $\GL(n)$; therefore, our results imply a kind of autoduality
of the Hitchin fibration for $\GL(n)$. Such autoduality can be viewed as a `classical limit' of the (conjectural)
Langlands transform. For arbitrary reductive group $G$, one expects a duality between the Hitching fibrations of $G$ and its Langlands dual ${}^LG$. For smooth fibers of the Hitchin fibration, such duality is proved by 
R.~Donagi and T.~Pantev in \cite{DP} (assuming some non-degeneracy conditions).

\item
Our construction of $\oP$ can be obtained as a `classical limit' of V.~Drinfeld's construction of automorphic
sheaves for group $\GL(2)$ (\cite{Dr}), see Section~\ref{sc:langlands} for more details.
\end{myenum}
\end{remarks*}

\subsection*{Acknowledgments}
I am very grateful to V.~Drinfeld for sharing his ideas on this subject. Thanks to him, I abandoned my original, much
clumsier, approach based on presentation schemes. 

I would also like to thank R.~Donagi, V.~Ginzburg, S.~Kumar, and T.~Pantev for their remarks and suggestions.

This work is supported in part by Alfred P.~Sloan Foundation under the Sloan Research Fellowship program.

\section{Main results}

\subsection{Summary of main results}
Fix a ground field $\kkk$ of characteristic zero. To simplify notation, we also assume that $\kkk$ is algebraically closed;
this assumption is not necessary for the argument.
Let $C$ be an integral projective curve over $\kkk$. Denote by $g$ the arithmetic genus of $C$, and let $J$ be the Jacobian
of $C$, that is, $J$ is the moduli space of line bundles on $C$ of degree zero. Denote by $\oJ$ the compactified
Jacobian; in other words, $\oJ$ is the moduli space of torsion-free sheaves
on $C$ of generic rank one and degree zero.
(For a sheaf $F$ of generic rank one, the degree is $\deg(F)=\chi(F)-\chi(O_C)$.)
Then $\oJ$ is an irreducible projective variety; it is locally a complete intersection of dimension $g$ (\cite{irreducibility}).
Clearly, $J\subset\oJ$ is an open smooth subvariety.

Let $P$ be the Poincar\'e bundle; it is a line bundle on $J\times\oJ$. Its fiber over $(L,F)\in J\times\oJ$
equals
\begin{equation}
\label{eq:Poincare}
P_{(L,F)}=\detrg(L\otimes F)\otimes\detrg (O_C)\otimes\detrg(L)^{-1}\otimes\detrg(F)^{-1}.
\end{equation}
More explicitly, we can write $L\simeq O(\sum a_ix_i)$ for a divisor $\sum a_ix_i$ supported by the smooth locus
of $C$, and then
$$P_{(L,F)}=\bigotimes(F_{x_i})^{\otimes a_i}.$$

The same formula \eqref{eq:Poincare} defines $P_{(F_1,F_2)}$ for any pair $(F_1,F_2)\in\oJ\times\oJ$ with either
$F_1\in J$ or $F_2\in J$. Equivalently, $P|_{J\times J}$ is symmetric under the permutation of factors in $J\times J$;
and therefore $P$ naturally extends to a line bundle on $J\times\oJ\cup\oJ\times J\subset\oJ\times\oJ$. We denote this
extension by the same letter $P$.

For the rest of the paper, we assume that $C$ has planar singularities; in other words,
the tangent space to $C$ at any point is at most two-dimensional.

\begin{THEOREM} \label{th:oP}
There exists a coherent sheaf $\oP$ on $\oJ\times\oJ$
with the following properties:
\begin{enumerate}
\item\label{th:oP1} $\oP|_{J\times\oJ\cup\oJ\times J}\simeq P$;
\item\label{th:oP3} 
$\oP$ is flat for the projection $p_2:\oJ\times\oJ\to\oJ$, and the restriction $\oP|_{\oJ\times\{F\}}$ is a Cohen-Macaulay
sheaf for every $F\in\oJ$.
\end{enumerate}
\end{THEOREM}

\begin{remark*} As explained in Section~\ref{sc:properties}, Theorem~\ref{th:oP} uniquely determines $\oP$ as a 
`Cohen-Macaulay extension' of $P$ under the embedding $j:J\times\oJ\cup\oJ\times J\hookrightarrow\oJ\times\oJ$. 
In fact, $\oP=j_*P$. 
\end{remark*}

By Theorem~\ref{th:oP}\eqref{th:oP3}, $\oP$ is a family of coherent (Cohen-Macaulay) sheaves on $\oJ$ parametrized by $\oJ$.
For fixed $F\in\oJ$, denote the corresponding coherent sheaf on $\oJ$ by $\oP_F$. In other words, $\oP_F$ is the restriction 
$\oP|_{\oJ\times\{F\}}$. 

Let $\Pic(\oJ)^=$ be the moduli space of torsion-free sheaves of generic rank one on $\oJ$. A.~Altman and S.~Kleiman proved in
\cite{CP1,CP2} that connected components of $\Pic(\oJ)^=$ are proper schemes (\cite[Theorem~3.1]{CP2}). 
The correspondence $F\mapsto\oP_F$ can be viewed as a morphism 
\[\rho:\oJ\to\Pic(\oJ)^=.\]
Denote by $\oPic^0(\oJ)\subset\Pic(\oJ)^=$ the irreducible component of the trivial bundle
$O_\oJ\in\Pic(\oJ)\subset\Pic(\oJ)^=$ (since $O_\oJ\in\Pic(\oJ)^=$ is a smooth point, 
it is contained in a single component). 
We prove that $\oJ$ is self-dual in the following sense.

\begin{THEOREM}\label{th:autoduality}
$\rho$ is an isomorphism $\oJ\iso\oPic^0(\oJ)$. Moreover,
$\oPic^0(\oJ)\subset\Pic(\oJ)^=$ is a connected component.
\end{THEOREM}

\begin{remark*} The first statement of the theorem follows immediately from Theorem~\ref{th:oP} using \cite[Theorem~2.6]{compactified}. (Although \cite[Theorem~2.6]{compactified} is formulated for curves with double singularities,
the same argument works in the case of planar singularities if we use \cite[Theorem~C]{Ar}.) The second statement relies on Theorem~\ref{th:FM}. 
\end{remark*}

Finally, we show that $\oP$ also provides a `categorical autoduality' of $\oJ$ in the sense that the corresponding Fourier-Mukai
functor is an equivalence of categories.
\begin{THEOREM}\label{th:FM} Let $D^b(\oJ)$ be the bounded derived category of quasicoherent sheaves on $\oJ$.
The Fourier-Mukai functor
$${\mathfrak F}:D^b(\oJ)\to D^b(\oJ):\cG\mapsto Rp_{1,*}(p_2^*(\cG)\otimes\oP)$$
is an equivalence of categories.  Its quasi-inverse is given by
$$D^b(\oJ)\to D^b(\oJ):\cG\mapsto Rp_{2,*}(p_1^*(\cG)\otimes\oP^\vee)\otimes\det(H^1(C,O_C))^{-1}[g].$$
Here $\oP^\vee=\HOM(\oP,O_{\oJ\times\oJ})$.
\end{THEOREM}

\begin{remarks*}
\begin{myenum}
\item\label{rm:cubic} These results are known in the case of singular plane cubics (nodal or cuspidal). 
The sheaf $\overline P$ is constructed by E.~Esteves and S.~Kleiman in \cite{compactified} for curves $C$ with any
number of nodes and cusps; they also prove that $\overline{P}$ is universal (the first statement of Theorem~\ref
{th:autoduality}). If $C$ is a singular plane cubic, Theorem~\ref{th:FM} is proved by I.~Burban and B.~Kreussler (\cite{BK1}).
J.~Sawon proves it for nodal or cuspidal curves of genus two in \cite{Sawon}.

\item
For simplicity, we consider a single curve $C$ in this section, but all our results hold for
families of curves. Actually, the universal family of curves is used in the proof of Theorem~\ref{th:FM}.
\end{myenum}
\end{remarks*}

\subsection{Organization} The rest of the paper is organized as follows.

Sections~\ref{sc:CM} and \ref{sc:PHilbS} contain preliminary results on Cohen-Macaulay sheaves and punctual
Hilbert schemes of surfaces. These are used in the construction of the Poincar\'e sheaf $\oP$ 
in Section~\ref{sc:Q}. The proof of the key step in the construction is contained in Section~\ref{sc:proof}. 
This completes the proof of Theorem~\ref{th:oP}.

Theorem~\ref{th:oP} easily implies certain simple properties of $\oP$ that are given in Section~\ref{sc:properties}.
In Section~\ref{sc:FM}, we derive Theorem~\ref{th:FM} from these properties, and
show that Theorem~\ref{th:FM} implies Theorem~\ref{th:autoduality}.

\section{Cohen-Macaulay sheaves}\label{sc:CM}
Our argument is based on certain properties of Cohen-Macaulay sheaves. 
Let us summarize these properties.

Let $X$ be a scheme (all schemes are assumed to be of finite type over $\kkk$). 
Denote by $D^b_{coh}(X)\subset D^b(X)$ the coherent derived category of $X$. Suppose that $X$ has pure dimension.
Let us normalize the dualizing complex $\DC_X\in D^b_{coh}(X)$ by the condition that its stalk at generic points of $X$ is 
concentrated in cohomological degree $0$. If $X$ is Gorenstein, $\DC_X$ is an invertible sheaf, which we denote $\omega_X$.

Consider the duality functor
\[\D:D^b_{coh}(X)\to D^b_{coh}(X):\cG\mapsto R\HOM(\cG,\DC_X).\]
Let $M$ be a coherent sheaf on $X$. Set $d:=\codim(\supp(M))$. Then $H^i(\D(M))=0$ for $i<d$. Recall that $M$ is 
Cohen-Macaulay (of codimension $d$) if and only if $H^i(\D(M))=0$ for all $i\ne d$, so that $\D(M)[d]$ is a coherent sheaf.

\subsection{Families of Cohen-Macaulay sheaves}
\begin{lemma} \label{lm:CMFamily}
Let $X$ and $Y$ be schemes of pure dimension, and suppose that $Y$ is Cohen-Macaulay. Suppose that a coherent
sheaf $M$ on $X\times Y$ is flat over $Y$, and that for every point $y\in Y$, the restriction $M|_{X\times\{y\}}$ is
Cohen-Macaulay of some fixed codimension $d$. 
\begin{enumerate}
\item\label{lm:CMFamily1} $M$ is Cohen-Macaulay of codimension $d$.
\item\label{lm:CMFamily2} If in addition $Y$ is Gorenstein, then $\D(M)[d]$ is also flat over $Y$, and
\[\D(M)[d]|_{X\times\{y\}}\simeq \D(M|_{X\times\{y\}})[d]\]
for all points $y\in Y$. 
\end{enumerate}
\end{lemma}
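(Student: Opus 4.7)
For (1), the plan is to use the standard depth additivity for flat families over a Cohen-Macaulay base. At any closed point $p = (x,y)$ of $\supp(M)$, flatness of $M$ over $Y$ yields
\[
\depth(M_p) = \depth\bigl((M|_{X\times\{y\}})_x\bigr) + \depth(O_{Y,y}).
\]
Using that $Y$ is Cohen-Macaulay (so $\depth(O_{Y,y}) = \dim_y Y$) and that each fiber is Cohen-Macaulay of codimension $d$ (so $\depth((M|_{X\times\{y\}})_x) = \dim_x X - d$), combined with the parallel additivity of support dimensions in flat families, I obtain $\depth(M_p) = \dim_p\supp(M)$ and $\codim_p\supp(M) = d$. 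This gives (1).

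For (2), I would compute the derived fibers $Li_y^*(\D(M)[d])$ directly and then appeal to the local criterion of flatness. The starting point is the Künneth formula for dualizing complexes: since $Y$ is Gorenstein, $\omega_Y$ is invertible and, with the chosen normalization,
\[
\DC_{X\times Y} \simeq p_X^*\DC_X \otimes p_Y^*\omega_Y,
\]
so $\D_{X\times Y}(M) \simeq R\HOM(M, p_X^*\DC_X) \otimes p_Y^*\omega_Y$. The essential step is the base-change identification
\[
Li_y^* R\HOM_{X\times Y}(M, p_X^*\DC_X) \simeq R\HOM_X(M|_{X\times\{y\}}, \DC_X).
\]
To prove it, I would locally resolve $M$ by finite free sheaves $F_\bullet$ on $X\times Y$; flatness of $M$ over $Y$ ensures that $i_y^* F_\bullet \to M|_{X\times\{y\}}$ remains a resolution. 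Because $p_X^*\DC_X$ is pulled back from $X$, applying $i_y^*$ termwise to $\HOM^\bullet(F_\bullet, p_X^*\DC_X)$ produces $\HOM^\bullet(i_y^* F_\bullet, \DC_X)$, which represents the right-hand side.

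Combining this with the fiberwise Cohen-Macaulay hypothesis, $Li_y^*(\D(M)[d])$ is concentrated in degree $0$ for every $y\in Y$ and agrees (up to the one-dimensional twist by the fiber of $\omega_Y$ at $y$) with $\D_X(M|_{X\times\{y\}})[d]$. By the local criterion of flatness, the coherent sheaf $\D(M)[d]$, already known to be a sheaf by (1), is flat over $Y$, and the base-change identity yields the fiber formula claimed in the statement. The main obstacle will be establishing the base-change identity for $R\HOM$: in general $Li_y^*$ does not commute with $R\HOM$ without finiteness assumptions on the first argument, but it does here because the second argument is pulled back from the $X$-factor, reducing the analysis to a termwise operation on a free resolution that remains exact on fibers by flatness of $M$.
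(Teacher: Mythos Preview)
Your argument is correct and is essentially the paper's proof with the details unpacked. For (1) the paper simply cites \cite[Corollary~6.3.3]{EGAIV}, which is exactly the depth-additivity formula for a flat module over a local base that you write out; for (2) the paper invokes the single identity
\[
L\iota^*\bigl(R\HOM(M,\DC_{X\times Y})\bigr)\simeq R\HOM\bigl(L\iota^*M,\,L\iota^*\DC_{X\times Y}\bigr),
\]
which, after using $\DC_{X\times Y}\simeq p_X^*\DC_X\otimes p_Y^*\omega_Y$ (valid since $Y$ is Gorenstein), is precisely your base-change statement, and then reads off flatness and the fiber formula exactly as you do.
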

\begin{proof} For \eqref{lm:CMFamily1}, we need to show that $M_z$ is a Cohen-Macaulay $O_z$-module for all $z\in X\times Y$. Set $y=p_2(z)\in Y$. The claim then follows from \cite[Corollary~6.3.3]{EGAIV} applied to morphism $O_y\to O_z$, the $O_y$-module
$O_y$ and the $O_z$-module $M_z$. 

\eqref{lm:CMFamily2} follows from the 
identity
\[L\iota^*(R\HOM(M,\DC_{X\times Y}))=R\HOM(L\iota^*M,L\iota^*\DC_{X\times Y}),\] where 
$\iota:X\times\{y\}\hookrightarrow X\times Y$ is the embedding.
\end{proof}

\begin{remark*} In principle, the lemma can be stated for all (not necessarily closed) points $y\in Y$; then the restriction
$M|_{X\times\{y\}}$ should be understood as an appropriate inverse image. This form of Lemma~\ref{lm:CMFamily} 
is more natural, and it suffices for our purposes. On the other hand, it is easy to see that it suffices to check the Cohen-Macaulay property of $M_z$ for closed points $z\in X\times Y$.
\end{remark*}

\subsection{Extension of Cohen-Macaulay sheaves}
Recall that a Cohen-Macaulay sheaf is \emph{maximal} if it has codimension zero.
Maximal Cohen-Macaulay sheaves are normal in the sense that their sections extend across subsets of codimension two.

\begin{lemma}\label{lm:CMExt} As before, let $X$ be a scheme of pure dimension. Let $M$ be a maximal Cohen-Macaulay sheaf 
on $X$. Then for
any closed subset $Z\subset X$ of codimension at least two, we have $M=j_*(M|_{X-Z})$ for the embedding $j:X-Z\hookrightarrow X$.
\end{lemma}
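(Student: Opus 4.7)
The plan is to reduce the statement to a local cohomology vanishing condition, and then to deduce that vanishing from the maximal Cohen-Macaulay property via depth estimates along $Z$.

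First, recall the standard exact sequence relating a sheaf to its local cohomology along a closed subset $Z \subset X$ with complement $U = X - Z$:
\[
0 \to H^0_Z(M) \to M \to j_*j^*M \to H^1_Z(M) \to 0.
\]
So the lemma is equivalent to the vanishing $H^0_Z(M) = H^1_Z(M) = 0$, i.e., $\depth_{I_Z}(M) \ge 2$. This can be checked locally at each point $x \in Z$: it suffices to show that $\depth_{I_{Z,x}}(M_x) \ge 2$ in $O_{X,x}$.

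Second, reduce to the case $x \in \supp(M)$, since otherwise $M_x = 0$ and there is nothing to prove. For such $x$, the maximal Cohen-Macaulay hypothesis gives $\depth_{O_{X,x}}(M_x) = \dim(O_{X,x})$, and more importantly, any part of a system of parameters for $O_{X,x}$ (modulo the annihilator of $M_x$, which kills only components of $X$ not appearing in $\supp(M)$) forms an $M_x$-regular sequence.

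Third, use the codimension assumption on $Z$. Since $\codim(Z) \ge 2$ in $X$ and $\supp(M)$ is a union of irreducible components of $X$ (as $M$ has codimension zero), the ideal $I_{Z,x}$ cuts out a subset of codimension at least two inside $\supp(M)$ near $x$. By prime avoidance, one can choose $f_1, f_2 \in I_{Z,x}$ that form part of a system of parameters on $\supp(M)_x$; by the MCM property these are an $M_x$-regular sequence, so $\depth_{I_{Z,x}}(M_x) \ge 2$.

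Combining these steps gives $H^0_Z(M) = H^1_Z(M) = 0$ and hence $M \iso j_*(M|_U)$. The main subtlety I expect is step three: one must be careful that the codimension condition on $Z$ inside $X$ really translates into a codimension condition inside the support of $M$, which is where prime avoidance is applied; the pure-dimension hypothesis on $X$ and the codimension-zero condition on $\supp(M)$ make this work cleanly, but without them (e.g. if $\supp(M)$ were lower-dimensional, violating maximality), the intersection $Z \cap \supp(M)$ could fail to have codimension two inside $\supp(M)$.
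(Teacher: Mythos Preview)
Your argument is correct. The paper does not give an independent proof; it simply cites \cite[Theorem~5.10.5]{EGAIV} and remarks parenthetically that the weaker condition $(S_2)$ already suffices. What you have written is precisely the standard depth/local-cohomology argument underlying that EGA reference: reduce to $H^0_Z(M)=H^1_Z(M)=0$ via the exact sequence for $j_*j^*$, then produce an $M_x$-regular sequence of length two inside $I_{Z,x}$ by prime avoidance, using that a maximal Cohen--Macaulay module has no embedded primes and that the codimension hypothesis on $Z$ keeps $I_{Z,x}$ out of the minimal primes at each step. So your approach and the paper's are the same in substance; you have simply unpacked the citation.
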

\begin{proof} This is a special case of \cite[Theorem~5.10.5]{EGAIV}. (Actually, it suffices to require that $M$ has property $(S_2)$.)
\end{proof}

\subsection{Acyclicity}
\begin{lemma} \label{lm:CMAcyclic} Let $f:Y\to X$ be a morphism of schemes. Suppose that $X$ is a Gorenstein scheme of
pure dimension, and that $f$ has finite Tor-dimension. Let $M$ be a maximal Cohen-Macaulay sheaf on $X$.
\begin{enumerate}
\item\label{lm:CMAcyclic1} $Lf^*M=f^*M$.
\item\label{lm:CMAcyclic2} In addition, suppose $Y$ is Cohen-Macaulay. Then $f^*M$ is maximal Cohen-Macaulay.
\end{enumerate}
\end{lemma}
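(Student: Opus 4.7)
The statements are local, so fix a point $y \in Y$ with $x = f(y)$ and reduce to commutative-algebra statements about $R = O_{X, x}$ (Gorenstein local), $S = O_{Y, y}$ (of finite flat dimension over $R$ via $f$), and $N = M_x$ (a MCM $R$-module).

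For part (1), I would actually prove the stronger statement that $\mathrm{Tor}^R_i(N, T) = 0$ for all $i > 0$ and every $R$-module $T$ of finite flat dimension; taking $T = S$ yields the lemma. The MCM condition, combined with $\omega_R$ being free of rank one, gives $\mathrm{Ext}^i_R(N, R) = 0$ for $i > 0$; this equips $N$ with a complete resolution $F^\bullet$---a bi-infinite acyclic complex of finitely generated free $R$-modules, obtained by splicing a free resolution of $N$ to the dualized free resolution of $N^\vee := \HOM_R(N, R)$ along the reflexivity $N \simeq N^{\vee\vee}$. Taking a finite flat resolution $G_\bullet \to T$, the total complex of $F^\bullet \otimes_R G_\bullet$ is acyclic (filter by $G$-degree: each row $F^\bullet \otimes G_p$ is exact, since $F^\bullet$ is exact and $G_p$ is flat) and also identifies with $F^\bullet \otimes_R T$ (filter by $F$-degree: each column $F^i \otimes G_\bullet$ resolves $F^i \otimes T$). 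Since the non-positive truncation of $F^\bullet$ is a free resolution of $N$, this acyclicity gives $\mathrm{Tor}^R_{>0}(N, T) = 0$.

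For (2), I would use this stronger form of (1) to exhibit a regular sequence on $N \otimes_R S$ of length $d = \dim S$. Pick a system of parameters $t_1, \dots, t_d \in S$; since $Y$ is CM, this is an $S$-regular sequence. By induction on $j$ using the short exact sequences $0 \to S/(t_1,\dots,t_{j-1}) \xrightarrow{t_j} S/(t_1,\dots,t_{j-1}) \to S/(t_1,\dots,t_j) \to 0$, each quotient $S/(t_1,\dots,t_j)$ has finite flat dimension over $R$. The stronger (1) then gives $\mathrm{Tor}^R_i(N, S/(t_1,\dots,t_j)) = 0$ for $i > 0$, which by change-of-rings (the spectral sequence collapses thanks to the case $j = 0$ of (1)) equals $\mathrm{Tor}^S_i(N \otimes_R S, S/(t_1,\dots,t_j))$. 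Vanishing of the degree-$1$ Tor is precisely the statement that $t_{j+1}$ is a non-zerodivisor modulo $(t_1,\dots,t_j)$ on $N \otimes_R S$. Iterating, $t_1,\dots,t_d$ is $(N \otimes_R S)$-regular, so $\mathrm{depth}_S(N \otimes_R S) = d$. Combined with nonvanishing of $N \otimes_R S$ (Nakayama), this proves $N \otimes_R S$ is MCM on $S$.

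The main obstacle is the homological-algebra input for (1): existence of the complete resolution and the corresponding Tor-vanishing---essentially the theorem that MCM modules over Gorenstein rings are Gorenstein projective (Auslander--Bridger), combined with the standard Tor-vanishing of Gorenstein projectives against modules of finite flat dimension. Once (1) is established in this strengthened form, part (2) reduces to the clean regular-sequence and change-of-rings argument above.
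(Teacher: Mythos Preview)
Your proof is correct. For part~(1), you and the paper use the same underlying fact---that a maximal Cohen-Macaulay sheaf on a Gorenstein scheme is an $\infty$-syzygy---but package it differently. The paper simply embeds $M$ in a short exact sequence $0\to M\to E\to M'\to 0$ with $E$ locally free and $M'$ again maximal Cohen-Macaulay, obtains the dimension shift $L_if^*M\simeq L_{i+1}f^*M'$ for $i>0$, and iterates until finite Tor-dimension kills everything. You instead splice the whole complete resolution together and run a double-complex argument; this is more machinery but gives the stronger conclusion $\mathrm{Tor}^R_{>0}(N,T)=0$ for every $T$ of finite flat dimension, which you then actually use.

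For part~(2) the routes diverge. The paper reuses the same syzygy trick: pulling back the short exact sequence (licensed by part~(1)) and applying $\mathrm{Ext}^\bullet(-,\DC_Y)$ gives $\mathrm{Ext}^i(f^*M,\DC_Y)\simeq\mathrm{Ext}^{i+1}(f^*M',\DC_Y)$ for $i>0$, and one again iterates. Your argument instead establishes the depth inequality directly, by showing a system of parameters in $S$ remains regular on $N\otimes_RS$ via your strengthened Tor-vanishing and change of rings. The paper's approach is slicker and treats the two parts symmetrically via the dualizing complex; yours is more hands-on and has the advantage that it never touches $\DC_Y$ or its finite injective dimension, relying only on the depth characterization of Cohen-Macaulayness.
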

\begin{proof} The statement is local on $X$, so we may assume that it is affine without losing generality. Essentially, the 
statement follows because $M$ is an $\infty$-syzygy sheaf. Indeed, we can include $M$ into a short exact sequence
\[0\to M\to E\to M'\to 0,\]
for some vector bundle $E$ and a maximal Cohen-Macaulay sheaf $M'$. Then 
\[L_if^*M\simeq L_{i+1}f^*M'\quad\text{for all }i>0,\] and \eqref{lm:CMAcyclic1}
follows by induction. 

If $Y$ is Cohen-Macaulay, we can assume that it is of pure dimension (since this is true locally). Then a similar argument 
shows that
\[\EXT^i(f^*M,\DC_Y)\simeq\EXT^{i+1}(f^*M',\DC_Y)\quad\text{for all }i>0,\] which
implies \eqref{lm:CMAcyclic2}. 
\end{proof}

\section{Punctual Hilbert schemes of surfaces}\label{sc:PHilbS}
Let $S$ be a smooth surface. Let us review some properties of the Hilbert scheme of $S$. Fix an integer $n>0$.

\subsection{Hilbert scheme of points}\label{sc:Hilb} 
Let $\Hilb_S=\Hilb_S^n$ be the Hilbert scheme of finite subschemes $D\subset S$ of length $n$. It is well known that 
$\Hilb_S$ is smooth of dimension $2n$, and that it is connected if $S$ is connected. This statement is due to 
J.~Fogarty (\cite{Fog}).

Fix a point $0\in S$, and let $\Hilb_{S,0}\subset\Hilb_S$ be the closed subset of $D\in\Hilb_S$ such that $D$ is 
(set-theoretically) supported at $0$. If we fix local coordinates $(x,y)$ at $0$, we can identify $\Hilb_{S,0}$ with the
scheme of codimension $n$ ideals in $\kkk[[x,y]]$.

\begin{lemma}[J.~Brian{\c{c}}on] \label{lm:HilbS0}
$\dim(\Hilb_{S,0})=n-1$; $\Hilb_{S,0}$ has a unique component of maximal dimension.
\end{lemma}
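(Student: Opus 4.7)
My approach is to prove the stronger statement that $\Hilb_{S,0}$ is irreducible of dimension $n-1$, from which the lemma is immediate. After passing to formal coordinates, identify $\Hilb_{S,0}$ with the scheme parametrizing colength-$n$ ideals in $R := \kkk[[x,y]]$. Inside this, distinguish the \emph{curvilinear locus} $U \subset \Hilb_{S,0}$: the set of ideals $I$ such that $R/I$ is generated by a single element, equivalently the set of $D$ that embed in some formal smooth curve through $0$. Every such ideal takes, after possibly swapping $x$ and $y$, the form $(y - f(x), x^n)$ with $f \in x\kkk[x]$ of degree less than $n$, so $U$ is covered by two affine patches each isomorphic to $\mathbb{A}^{n-1}$. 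This exhibits $U$ as a smooth irreducible open subscheme of $\Hilb_{S,0}$ of dimension $n-1$.

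The heart of the argument is to show that $U$ is dense in $\Hilb_{S,0}$, so that $\overline{U}$ is the unique component of maximal dimension. This is Brian\c{c}on's theorem. The strategy I would follow stratifies $\Hilb_{S,0}$ by the Hilbert function of $R/I$, which takes finitely many values, and produces, for each non-curvilinear ideal $I$, a one-parameter family of colength-$n$ ideals all set-theoretically supported at $0$, whose generic fiber has Hilbert function strictly closer to the curvilinear one and whose special fiber is $I$. Iterating collapses every stratum into $\overline{U}$. A convenient way to construct such deformations is via Gr\"obner degenerations: any initial monomial ideal can be explicitly linked by a chain of one-parameter families to the curvilinear monomial ideal $(y, x^n)$.

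The main obstacle is controlling these degenerations uniformly. One must ensure the deformed ideals remain in $\Hilb_{S,0}$, i.e., stay of colength $n$ and set-theoretically supported at $0$, while strictly decreasing some measure of non-curvilinearity at each step, and one needs this to work for every possible Hilbert function. This combinatorial bookkeeping on monomial ideals in the plane is exactly the technical content of Brian\c{c}on's original argument.
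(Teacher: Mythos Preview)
Your outline is a correct sketch of Brian\c{c}on's irreducibility theorem and certainly implies the lemma. The paper, however, gives no argument: its proof is a bare list of references. More to the point, the remark immediately following the lemma says explicitly that full irreducibility---which is what you set out to prove---is \emph{not} required, and that the weaker statement actually asserted (the dimension together with uniqueness of the top-dimensional component) is ``much easier,'' with a pointer to Baranovsky. So you are proving strictly more than the paper needs. The payoff of your route is the stronger conclusion; the cost is precisely the ``main obstacle'' you identify, the delicate degeneration showing that the curvilinear locus is dense. The easier route the paper alludes to keeps your first paragraph (the curvilinear locus $U$ is irreducible of dimension $n-1$) but replaces the density argument by a direct upper bound on the dimension of the non-curvilinear locus, which already forces $U$ to be the unique component of maximal dimension.
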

\begin{proof} See \cite{Brian}, \cite{Iarrobino}, \cite{Nakajima}, or \cite{Baranovsky}.
\end{proof}

\begin{remark*} In fact, $\Hilb_{S,0}$ is irreducible. However, we do not need this claim; without it, Lemma~\ref{lm:HilbS0}
is much easier, see \cite[Theorem 2]{Baranovsky}.
\end{remark*}

Consider the symmetric power $\Sym^n S$. We write its elements as $0$-cycles 
\[\zeta=\sum_{x\in S}\zeta_x\cdot x\qquad (\zeta_x\ge 0,\sum_x\zeta_x=n).\]
Set
\[\supp(\zeta):=\{x\in S\st \zeta_x\ne 0\}\qquad (\zeta=\sum_x\zeta_x\cdot x\in\Sym^n S).\]

\begin{corollary} \label{co:HilbS0}
Consider the \emph{Hilbert-Chow morphism} 
\[\HC:\Hilb_S\to\Sym^n S:D\to\sum_{x\in S}(\length_xD)\cdot x.\]
For any $\zeta\in\Sym^n S$, the preimage $\HC^{-1}(\zeta)$ has a unique component of maximal dimension;
its dimension equals $n-|\supp(\zeta)|$. 
\end{corollary}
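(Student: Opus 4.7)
The plan is to reduce the statement to Lemma~\ref{lm:HilbS0} by exhibiting the fiber $\HC^{-1}(\zeta)$ as a product of punctual Hilbert schemes, one for each point in $\supp(\zeta)$.

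First I would use the fact that a finite closed subscheme $D \subset S$ with $\HC(D)=\zeta=\sum_x\zeta_x\cdot x$ has set-theoretic support exactly $\supp(\zeta)$, which is a finite set of distinct points. Since these points are closed and distinct in the smooth surface $S$, the subscheme $D$ splits canonically as a disjoint union $D=\bigsqcup_{x\in\supp(\zeta)}D_x$, where $D_x\subset S$ is a closed subscheme supported at $x$ of length $\zeta_x$. This decomposition is functorial in families (any flat family of subschemes with fixed support type decomposes pointwise, and the components vary in flat families around each $x$), so it yields an isomorphism of schemes
\[
\HC^{-1}(\zeta)\;\simeq\;\prod_{x\in\supp(\zeta)}\Hilb_{S,x}^{\zeta_x},
\]
where each factor is the punctual Hilbert scheme appearing in Lemma~\ref{lm:HilbS0}.

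Next I would invoke Lemma~\ref{lm:HilbS0} for each factor: $\Hilb_{S,x}^{\zeta_x}$ has dimension $\zeta_x-1$ and a unique irreducible component of that maximal dimension. Taking the product over $x\in\supp(\zeta)$, the dimension adds up to
\[
\sum_{x\in\supp(\zeta)}(\zeta_x-1)\;=\;n-|\supp(\zeta)|.
\]
An irreducible component of maximal dimension in a product of schemes of finite type over $\kkk$ must be the closure of a product of maximal-dimensional components in each factor; since each factor has a unique such component, the product has a unique maximal-dimensional component as well.

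The only step requiring any care is the canonical decomposition of $D$ according to support, and that in turn is standard once one notes that $\supp(\zeta)$ consists of finitely many distinct closed points, so the local rings at distinct points are independent. Thus the main work has already been done in Lemma~\ref{lm:HilbS0}, and this corollary is a direct consequence.
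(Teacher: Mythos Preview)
Your proof is correct and follows exactly the paper's approach: the paper's proof is the single line ``The preimage equals $\prod_x\Hilb_{S,x}^{\zeta_x}$,'' and you have simply spelled out the details of this product decomposition and the application of Lemma~\ref{lm:HilbS0} to each factor.
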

\begin{proof} The preimage equals $\prod_x\Hilb_{S,x}^{\zeta_x}$.
\end{proof}

Denote by $\Hilb'_S\subset\Hilb_S$ the open subscheme parametrizing $D\in\Hilb_S$ such that $D$ can be embedded into a smooth
curve (which can be assumed to be $\A1$ without loss of generality). Equivalently, $D\in\Hilb'_S$ if and only if the tangent
space to $D$ at every point is at most one-dimensional. 

\begin{lemma}
$\codim(\Hilb_S-\Hilb'_S)\ge 2$.
\end{lemma}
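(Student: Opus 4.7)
The plan is to bound the dimension of the complement $Z := \Hilb_S \setminus \Hilb'_S$ by stratifying via the Hilbert--Chow morphism $\HC : \Hilb_S \to \Sym^n S$. Since $\Hilb_S$ is smooth of pure dimension $2n$, it suffices to show $\dim Z \le 2n - 2$.

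First I would observe that $D \in Z$ precisely when $D$ fails to be curvilinear, i.e.\ there exists $x \in \supp(D)$ at which $O_{D,x}$ has embedding dimension $2$. For a cycle $\zeta = \sum_i n_i \cdot x_i \in \Sym^n S$ with $k := |\supp(\zeta)|$, the fiber of $\HC$ decomposes as $\HC^{-1}(\zeta) = \prod_i \Hilb_{S,x_i}^{n_i}$. The key input is Lemma~\ref{lm:HilbS0}: $\Hilb_{S,x}^{(m)}$ has a unique component of maximal dimension $m-1$. Within $\Hilb_{S,x}^{(m)}$ the curvilinear locus is open, contains the ideal $(y,x^m)$ in local coordinates, and has dimension $m-1$ (it is parametrized by $(m-1)$-jets of smooth curve germs through $x$). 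Consequently, its closure is the unique top-dimensional component, and the non-curvilinear locus has dimension at most $m-2$.

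Next, I would bound the fibers of $\HC$ restricted to $Z$. For $D \in Z$ some $n_i$ must be $\ge 3$ and the local factor at $x_i$ must lie in the non-curvilinear locus of $\Hilb_{S,x_i}^{(n_i)}$. Estimating factor by factor, the non-curvilinear part of $\HC^{-1}(\zeta)$ has dimension at most
\[
(n_i - 2) + \sum_{j \ne i}(n_j - 1) \;=\; n - k - 1.
\]
The locus of cycles in $\Sym^n S$ with exactly $k$ distinct points in the support is a constructible subset of dimension $2k$, so the stratum of $Z$ over it has dimension at most $2k + (n-k-1) = n + k - 1$. Since some $n_i \ge 3$ forces $k \le n - 2$, we conclude $\dim Z \le 2n - 3$, giving $\codim(Z) \ge 3 \ge 2$.

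The only nontrivial input is the identification of the curvilinear locus with (a dense open subset of) the unique top-dimensional component of $\Hilb_{S,x}^{(m)}$, which is the point where Lemma~\ref{lm:HilbS0} is used; everything else is a bookkeeping dimension count over a finite collection of partition strata.
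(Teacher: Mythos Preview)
Your argument is correct and uses the same Hilbert--Chow stratification and the same key input (Lemma~\ref{lm:HilbS0}/Corollary~\ref{co:HilbS0}) as the paper. The paper's version is shorter: it simply observes that $D\notin\Hilb'_S$ forces $|\supp(\HC(D))|\le n-2$, and then bounds $Z$ by the entire preimage $\HC^{-1}\{\zeta:|\supp(\zeta)|\le n-2\}$, which already has dimension at most $2n-2$ by the fiber/base count $n-k$ over $2k$. Your additional step---identifying the curvilinear locus with the unique top-dimensional component of $\Hilb_{S,x}^{(m)}$ so that the non-curvilinear part of each fiber drops by one more---is correct and yields the sharper bound $\codim\ge 3$, but is not needed for the lemma as stated.
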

\begin{proof} If $D\in\Hilb_S-\Hilb'_S$, then $|\supp(\HC(D))|\le n-2$, and the lemma follows from Corollary~\ref{co:HilbS0}.
\end{proof}

\subsection{Flags of finite schemes}\label{sc:Flag}
Let $\Flag'_S$ be the moduli space of flags
\[\emptyset=D_0\subset D_1\subset\dots\subset D_n\subset S,\]
where each $D_i$ is a finite scheme of length $i$ and $D_n\in\Hilb'_S$. It is equipped with maps
\[\psi:\Flag'_S\to\Hilb'_S:(\emptyset=D_0\subset D_1\subset\dots\subset D_n)\mapsto D_n\]
and
\[\sigma:\Flag'_S\to S^n:(\emptyset=D_0\subset D_1\subset\dots\subset D_n)\mapsto(\supp(\ker(O_{D_i}\to O_{D_{i-1}})))_{i=1}^n.\]
Moreover, $\Flag'_S$ carries an action of $S_n$.

\begin{example} \label{ex:T}
For $D\in\Hilb'_S$, choose an embedding $t:D\hookrightarrow\A1$. Then $t(D)=Z(f)$ for a monic degree $n$ polynomial
$f\in\kkk[t]$. The fiber $\psi^{-1}(D)$ is identified with the scheme
\[\Flag_f:=\{(t_1,\dots,t_n)\in\A{n}\st f(t)=(t-t_1)\dots(t-t_n)\};\]
the identification sends $(t_1,\dots,t_n)\in\Flag_f$ to the flag
\[\emptyset\subset Z(t-t_1)\subset Z((t-t_1)(t-t_2))\subset\dots\subset Z(f).\]
The group $S_n$ acts on $\Flag_f$ by permuting $t_i$'s.
\end{example}

The following claim is well known.

\begin{proposition} 
\begin{enumerate}
\item $\psi$ is a degree $n!$ finite flat morphism. 

\item
There exists a unique action of $S_n$ on $\Hilb'_S$ such that $\psi$ and $\sigma$ are equivariant. Here $S_n$
acts on $\Hilb'_S$ trivially and on $S^n$ by permutation of factors. 

\item The fiber of $\psi_*(O_{\Flag'_S})$ over every point of $\Hilb'_S$ is isomorphic to the regular representation of
$S_n$.
\end{enumerate}
\end{proposition}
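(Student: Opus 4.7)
The plan is to realize $\psi$ étale-locally as the pullback of the classical finite flat cover $\mathbb{A}^n\to\mathbb{A}^n$ given by elementary symmetric functions, from which all three claims follow by inspection. Fix $D_0\in\Hilb'_S$. Since by definition the tangent space to $D_0$ at each of its support points is at most one-dimensional, one can find a regular function $t$ on a Zariski open $V\subset S$ containing $\supp(D_0)$ whose restriction to $D_0$ is a closed embedding into $\mathbb{A}^1$. The locus of $D\in\Hilb'_S$ for which the restriction of $t$ to the universal family remains a closed embedding is open and contains $D_0$; call it $U$. Over $U$, the universal subscheme $\mathcal{D}|_U\subset V\times U$ is cut out by a monic polynomial $f(t)=t^n-a_1t^{n-1}+\cdots\pm a_n$ with $a_i\in\mathcal{O}(U)$.

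By Example~\ref{ex:T}, $\Flag'_S\times_{\Hilb'_S}U$ is the closed subscheme of $\mathbb{A}^n\times U$ defined by $e_k(t_1,\dots,t_n)=a_k$ for $k=1,\dots,n$, where $e_k$ is the $k$-th elementary symmetric function; equivalently, it is the pullback of the cover $\pi:\mathrm{Spec}\,\kkk[t_1,\dots,t_n]\to\mathrm{Spec}\,\kkk[e_1,\dots,e_n]$ along $U\to\mathbb{A}^n$, $u\mapsto(a_1(u),\dots,a_n(u))$. Since $\kkk[t_1,\dots,t_n]$ is a free module of rank $n!$ over its subring of symmetric polynomials, $\pi$ is finite flat of degree $n!$, and pulling back shows $\psi|_U$ is finite flat of degree $n!$ with $(\psi_*\mathcal{O}_{\Flag'_S})|_U$ free of rank $n!$. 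Covering $\Hilb'_S$ by such opens proves (1).

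For (2), the $S_n$-action on $\mathrm{Spec}\,\kkk[t_1,\dots,t_n]$ by permutation of the $t_i$'s pulls back to an action on $\Flag'_S|_U$ that manifestly fixes $\psi|_U$ and is equivariant for $\sigma|_U$. Over the open dense locus $W\subset\Hilb'_S$ of reduced $D$ consisting of $n$ distinct points, $\psi$ is finite étale and $\sigma$ is injective on geometric fibers, so any action making $\psi$ and $\sigma$ equivariant is uniquely determined there. Since $\Flag'_S|_U$ is a complete intersection in $\mathbb{A}^n\times U$ (codimension $n$ in a smooth ambient scheme) hence Cohen--Macaulay, and is generically étale over $W\cap U$ hence reduced, two $S_n$-actions coinciding on the preimage of $W$ agree on all of $\Flag'_S|_U$. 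Therefore the local actions glue to a unique global $S_n$-action.

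For (3), $\psi_*\mathcal{O}_{\Flag'_S}$ is a rank-$n!$ locally free $\mathcal{O}_{\Hilb'_S}$-module carrying an $S_n$-action. Because $\kkk$ has characteristic zero, for each irreducible $S_n$-representation $V$ the $V$-isotypic part is an $\mathcal{O}_{\Hilb'_S}$-direct summand, hence locally free of rank constant on the connected scheme $\Hilb'_S$. Over any point of $W$, $\psi$ is a trivial étale $S_n$-torsor, so the fiber is the regular representation; constancy of ranks then forces this identification at every point. The main technical obstacle is the opening paragraph---upgrading the pointwise one-dimensional tangent condition defining $\Hilb'_S$ to a local embedding of the universal family into a relative affine line---after which the rest of the argument amounts to standard invariant theory and a semicontinuity argument in characteristic zero.
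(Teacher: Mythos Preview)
Your proof is correct and follows essentially the same approach as the paper: locally on $\Hilb'_S$, embed the universal subscheme into a relative $\mathbb{A}^1$ and reduce to the classical cover $\operatorname{Spec}\kkk[t_1,\dots,t_n]\to\operatorname{Spec}\kkk[e_1,\dots,e_n]$, exactly as suggested by Example~\ref{ex:T}. The paper's proof is a one-line pointer to this reduction, whereas you spell out the details (the complete-intersection/reducedness argument for uniqueness of the action, and the isotypic-rank argument in characteristic zero for part (3)); one small caveat is that your claim that $\Hilb'_S$ is connected requires $S$ connected, but the argument goes through component by component in any case.
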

\begin{proof}
Locally on $\Hilb'_S$, we can embed $D\in\Hilb'_S$ into $\A1$; the claim then follows from direct calculation 
(Example~\ref{ex:T}).
\end{proof}

There is also a more explicit description of $\psi_*(O_{\Flag'_S})$. Let $\cD\subset\Hilb_S\times S$ be the universal family of degree $n$ subschemes of $S$. Let 
$h:\cD\to\Hilb_S$ and $g:\cD\to S$ be the restrictions of projections. Notice that $h$ is a finite flat morphism of degree 
$n$. Set $\cA:=h_*O_\cD$. Then $\cA$ is a coherent sheaf of algebras on $\Hilb_S$ that is locally free of rank $n$. Let 
$\cA^\times\subset\cA$ be the subsheaf of invertible elements. We can view $\cA^\times$ as the sheaf of sections of a 
flat abelian group scheme over $\Hilb_S$: the fiber of this group scheme over $D\in\Hilb_S$ is $\kkk[D]^\times$. 
Clearly, $\cA^\times$ acts on $\cA$, and therefore also on the line bundle $\det(\cA)$. 
The action of $\cA^\times$ on $\det(\cA)$ is given by the norm character $\cN:\cA^\times\to O^\times$.

\begin{lemma}\label{lm:Norm} There is a natural $S_n$-equivariant identification
\[\psi_*(O_{\Flag'_S})=\left((\cA|_{\Hilb'_S})^{\otimes n}\right)_\cN,\]
where the lower index $\cN$ denotes the maximal quotient on which $\cA^\times$ acts via the character $\cN$.
\end{lemma}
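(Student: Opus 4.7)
The plan is to construct a canonical $S_n$-equivariant map $\cA^{\otimes n} \to \psi_*(O_{\Flag'_S})$, show it factors through the norm quotient, and then verify locally that the induced map is an isomorphism.

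For the construction: a flag $(\emptyset = D_0 \subset D_1 \subset \cdots \subset D_n) \in \Flag'_S$ determines $n$ closed points $p_i := \supp(\ker(O_{D_i} \to O_{D_{i-1}}))$ in $D_n$, yielding a morphism $\tau: \Flag'_S \to \cD \times_{\Hilb'_S} \cdots \times_{\Hilb'_S} \cD$ over $\Hilb'_S$. Since $\cA^{\otimes n}$ is the pushforward to $\Hilb'_S$ of the structure sheaf of this $n$-fold fibered product, $\tau$ induces a natural map $\cA^{\otimes n} \to \psi_*(O_{\Flag'_S})$. It is $S_n$-equivariant by construction: the action on $\Flag'_S$ permutes the $p_i$'s, matching the permutation of tensor factors on $\cA^{\otimes n}$.

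Next, I check that the map factors through $(\cA^{\otimes n})_\cN$, working locally on $\Hilb'_S$ as in Example~\ref{ex:T}. Near any closed point $D \in \Hilb'_S$, choose an embedding $\cD|_U \hookrightarrow U \times \A1$, so that $\cA|_U \simeq O_U[t]/f$ for a universal monic polynomial $f = t^n + a_1 t^{n-1} + \cdots + a_n$, and $\psi_*(O_{\Flag'_S})|_U \simeq O_U[t_1, \ldots, t_n]/I$ where $I$ is generated by $r_k := e_k(t_1, \ldots, t_n) - (-1)^k a_k$ for $k = 1, \ldots, n$ (equating coefficients of $f(t)$ and $\prod_i(t - t_i)$). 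Under the canonical map, $u \otimes 1 \otimes \cdots \otimes 1 \mapsto u(t_1)$ and similarly for the other slots, so the diagonal element $u \otimes u \otimes \cdots \otimes u$ maps to $\prod_i u(t_i)$. Since $t_1, \ldots, t_n$ are the universal roots of $f$ in the target, this product equals $\cN(u) = \det(\text{multiplication by } u \text{ on } \cA)$ by the product-of-eigenvalues formula. Hence the map descends to $(\cA^{\otimes n})_\cN \to \psi_*(O_{\Flag'_S})$.

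Finally, I verify that this induced map is an isomorphism; this step contains the main obstacle. The target is locally free of rank $n!$ by the preceding proposition, and surjectivity is clear since $t_1, \ldots, t_n$ generate the target over $O_U$ and lie in the image. For injectivity, I would show that each $r_k$ lies in the ideal $J \subset \cA^{\otimes n}$ generated by the norm relations $u^{\otimes n} - \cN(u)$ for $u \in \cA^\times$. The key computation: for $u = t - c$ with $c \in \kkk$ such that $f(c) \in O_U^\times$, we have $\cN(u) = (-1)^n f(c)$, and the relation $u^{\otimes n} - \cN(u)$ expands in $\cA^{\otimes n}$ to $\sum_{k=1}^n (-c)^{n-k} r_k$. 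After shrinking $U$ around $D$, such $c$'s exist in abundance: any scalar outside the fiber's support suffices. Choosing $n$ distinct such $c_1, \ldots, c_n$, the matrix $\bigl((-c_j)^{n-k}\bigr)_{j, k = 1, \ldots, n}$ is a Vandermonde matrix over $\kkk$ invertible by distinctness of the $c_j$'s, so each $r_k$ is a $\kkk$-linear combination of norm relations and lies in $J$, completing the proof.
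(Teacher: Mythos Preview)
Your proof is correct and follows essentially the same approach as the paper's: both reduce to the local model $\cA|_U\simeq O_U[t]/f$, identify the norm relations coming from linear elements $t-c$ with the expressions $(b-t_1)\cdots(b-t_n)-f(b)$, and then match these with the elementary-symmetric relations $r_k$ defining $\psi_*(O_{\Flag'_S})$. The paper's proof is terser (working fiberwise at a point $D$, invoking that $\kkk[D]^\times$ is generated by linear polynomials, and leaving the final identification as ``the statement follows''), while you spell out the Vandermonde step explicitly and work directly in families over $U$; substantively the arguments are the same.
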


\begin{remarks*} 
\begin{myenum}
\item Let us describe the map $(\cA|_{\Hilb'_S})^{\otimes n}\to\psi_*(O_{\Flag'_S})$ of Lemma~\ref{lm:Norm}.
Consider the $n$-fold fiber product
\[\cD_n:=\cD\times_{\Hilb_S}\cD\dots\times_{\Hilb_S}\cD=\{(D,s_1,\dots,s_n)\in\Hilb_S\times S^n\st s_i\in D\text{ for all }i\}.\]
The projection $h_n:\cD_n\to\Hilb_S$ is finite and flat of degree $n^n$ over $\Hilb_S$, and $h_{n,*}(O_{\cD_n})=\cA^{\otimes n}$.
Since the image of the map 
\[(\psi,\sigma):\Flag'_S\to \Hilb_S\times S^n\]
is contained in $\cD_n$, we obtain a morphism of sheaves of algebras 
\[h_{n,*}(O_{\cD_n})|_{\Hilb'_S}\to\psi_*(O_{\Flag'_S}).\]

\item Lemma~\ref{lm:Norm} is similar to the  description of $\psi_*(O_{\Flag'_S})$ given in \cite{Haiman}. Namely, 
$\psi_*(O_{\Flag'_S})$ is the quotient of $(\cA|_{\Hilb'_S})^{\otimes n}$ by the kernel of the symmetric form
\[\cA^{\otimes n}\times\cA^{\otimes n}\to\det\cA:(f_1\otimes\dots\otimes f_n,g_1\otimes\dots\otimes g_n)
\mapsto\bigwedge_{i=1}^n(f_i g_i).\]
This identification extends to $\Hilb_S$ and provides a description of the isospectral Hilbert scheme (see 
Section~\ref{sc:ISHilb}). I do not know whether the identification of Lemma~\ref{lm:Norm} also extends to $\Hilb_S$. Such
an extension would provide another formula for the Poincar\'e sheaf.
\end{myenum}
\end{remarks*}

\begin{proof}[Proof of Lemma~\ref{lm:Norm}] Take $D\in\Hilb'_S$, and choose an embedding $t:D\hookrightarrow\A1$. According
to Example~\ref{ex:T}, we have to identify $\kkk[X_f]$ and $(\kkk[D]^{\otimes n})_\cN$. Explicitly, for 
\[f=t^n-a_1t^{n-1}+\dots+(-1)^na_n,\]
we have
\[\kkk[X_f]=k[t_1,\dots,t_n]/(a_1-(t_1+\dots+t_n),\dots,a_n-(t_1\cdots t_n))\]
and
\[\kkk[D]^{\otimes n}=k[t_1,\dots,t_n]/(f(t_1),\dots,f(t_n)).\]
The multiplicative group
$\kkk[D]^\times$ is generated by linear polynomials. The polynomial $a(b-t)$ acts $\kkk[D]^{\otimes n}$ as 
$a^n(b-t_1)\cdots(b-t_n)$, while
$\cN(a(b-t))=a^nf(b)$. Therefore, $(\kkk[D]^{\otimes n})_\cN$
is the quotient of $\kkk[t_1,\dots,t_n]$ modulo the ideal generated by $f(t_1),\dots, f(t_n)$ and $(b-t_1)\dots(b-t_n)-f(b)$
for all $b\in\kkk$ such that $f(b)\ne 0$. The statement follows.
\end{proof}

\subsection{Isospectral Hilbert scheme}\label{sc:ISHilb}
Let us keep the notation of Section~\ref{sc:Flag}. The following result is a part of M.~Haiman's $n!$ Conjecture.

\begin{proposition}\label{pp:IS}
There exists a scheme $\IS_S$ and a commutative diagram
\[
\xymatrix{S^n\ar@{=}[d]& \Flag'_S\ar[l]_-\sigma\ar[r]^-\psi\ar@{^{(}->}[d]&\Hilb'_S\ar@{^{(}->}[d]\\
S^n&\IS_S\ar[l]\ar[r]&\Hilb_S,}
\]
in which the right square is Cartesian and $\IS_S\to\Hilb_S$ is a degree $n!$ finite flat morphism. The scheme
and the diagram are unique up to a unique isomorphism.
\end{proposition}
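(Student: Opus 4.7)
The plan is to define $\IS_S$ as M.~Haiman's isospectral Hilbert scheme from \cite{Haiman}: concretely, as the reduced scheme-theoretic image of $(\psi,\sigma): \Flag'_S \to \Hilb_S \times S^n$, or equivalently as the main reduced irreducible component of the fiber product $\Hilb_S \times_{\Sym^n S} S^n$. The two projections of $\Hilb_S \times S^n$ then supply the required maps $\IS_S \to \Hilb_S$ and $\IS_S \to S^n$. For Cartesianness of the right square, I would argue that $(\psi,\sigma)$ is proper (because $\psi$ is finite and $S^n$ is separated), so its image is already closed in $\Hilb'_S \times S^n$; this image, with its reduced structure, coincides with $\Flag'_S$, because $\Flag'_S$ is reduced and $(\psi,\sigma)$ identifies it with a locally closed subscheme (injectivity on a dense open where the $n$ support points are distinct, followed by a local check on the remaining strata using Example~\ref{ex:T}).

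The crux of the argument is to show that $\IS_S \to \Hilb_S$ is \emph{flat} of degree $n!$. Finiteness is routine: it follows from properness of $\Hilb_S \times S^n \to \Hilb_S \times \Sym^n S$ combined with finiteness of $\Flag'_S \to \Hilb'_S$, and the generic degree $n!$ is inherited from $\Flag'_S \to \Hilb'_S$. Flatness, however, is not elementary: the closure construction could a priori produce embedded components or fail to be Cohen-Macaulay over the singular strata of $\Hilb_S$. This is precisely the content of Haiman's $n!$ theorem, which asserts that $\IS_S$ is a Cohen-Macaulay (in fact Gorenstein) scheme and that its projection to $\Hilb_S$ is flat. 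I would invoke this theorem directly; this is the main, and essentially only serious, obstacle in the proof.

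For uniqueness, I would proceed as follows. Suppose $\IS'_S \to \Hilb_S$ is another scheme fitting into an analogous diagram with the required properties, and let $p,p'$ denote the two projections to $\Hilb_S$. Both $p_*O_{\IS_S}$ and $p'_*O_{\IS'_S}$ are locally free of rank $n!$ on the smooth scheme $\Hilb_S$, and by Cartesianness both restrict canonically to $\psi_*O_{\Flag'_S}$ over $\Hilb'_S$. Since the complement $\Hilb_S - \Hilb'_S$ has codimension at least two, Lemma~\ref{lm:CMExt} applied to these maximal Cohen-Macaulay sheaves extends this canonical identification uniquely from $\Hilb'_S$ to all of $\Hilb_S$. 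The same extension principle, applied to the multiplication maps $p_*O \otimes p_*O \to p_*O$ and to the $O_{S^n}$-module structure, transports the algebra structure and the map to $S^n$, yielding a canonical isomorphism $\IS_S \iso \IS'_S$ over $\Hilb_S \times S^n$. Any two such isomorphisms agree by construction on $\Hilb'_S$ (where both restrict to the identity on $\Flag'_S$) and therefore agree globally by the same codimension-two extension argument, establishing the `up to a unique isomorphism' clause.
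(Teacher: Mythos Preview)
Your approach is correct and reaches the same conclusion, but it is organized differently from the paper's proof. The paper does not construct $\IS_S$ as a closure inside $\Hilb_S\times S^n$; instead it argues abstractly that one must extend the rank $n!$ locally free sheaf of algebras $\psi_*O_{\Flag'_S}$ from $\Hilb'_S$ to all of $\Hilb_S$. Uniqueness of such an extension is immediate from $\codim(\Hilb_S-\Hilb'_S)\ge 2$ (exactly your Lemma~\ref{lm:CMExt} argument), and existence is \'etale-local on $\Hilb_S$, so it reduces at once to the case $S=\A2$, which is Haiman's theorem. In particular, the paper never needs to know that $(\psi,\sigma)$ is an embedding --- the remark following the proposition explicitly notes that this embedding property is a separate result of Haiman which the paper does \emph{not} use. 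Your closure construction, by contrast, builds this in from the start and then has to verify Cartesianness over $\Hilb'_S$ by hand (your sketch via Example~\ref{ex:T} is fine, but it is extra work the paper avoids).

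One point you gloss over: Haiman's theorem is proved for $S=\A2$, so to apply it to an arbitrary smooth surface you still need the \'etale-local reduction that the paper makes explicit. Your closure description is compatible with \'etale base change, so this works, but it should be said. In summary: both proofs rest on the same hard input (Haiman), and your uniqueness argument is essentially identical to the paper's; the difference is that the paper's sheaf-of-algebras viewpoint makes the logical structure (uniqueness automatic, existence local) cleaner and sidesteps the embedding question entirely, while your construction is more concrete but carries a bit more overhead.
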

\begin{proof} The scheme $\IS_S$ can be constructed by extending $\psi_*O_{\Flag'_S}$ to a rank $n!$ sheaf of algebras
on $\Hilb_S$. Since $\codim(\Hilb_S-\Hilb'_S)\ge 2$, such extension is unique if it exists. This implies the uniqueness claim.

The existence claim is local in the \'etale topology on $\Hilb_S$; it therefore reduces to the case $S=\A2$ proved by 
M.~Haiman in \cite{Haiman}.
\end{proof}

Following \cite{Haiman}, we call $\IS_S$ the \emph{isospectral Hilbert scheme} of $S$. We keep the notation $\psi$ and $\sigma$
for the extended morphisms $\IS_S\to\Hilb_S$ and $\IS_S\to S^n$. Finally, note that the action of $S_n$ on $\Flag'_S$ extends
to its action on $\IS_S$ (because $\IS_S$ is unique), and that $\psi$ and $\sigma$ are equivariant.

\begin{remark*} 
In \cite{Haiman}, it is shown that the map $(\psi,\sigma):\IS_S\to\Hilb_S\times S^n$ is an embedding, so that
$\IS_S$ can be defined as the closure of the image of $\Flag'_S$ in $\Hilb_S\times S^n$. We do not use this property. 
This gives us a choice of two possible references for Proposition~\ref{pp:IS}:  the original argument of \cite{Haiman} 
and V.~Ginzburg's paper \cite{Ginzburg}, which provides a construction of $\IS_S$ based on Hodge $D$-modules.
\end{remark*}

\subsection{Remark: stack of finite schemes}\label{sc:Tsch}
Let us define universal versions of $\Flag'_S$ and $\IS_S$.
 
Let $\FS$ be the algebraic stack parametrizing finite schemes of length $n$. Denote by $\FS_1\subset\FS$ the open substack of
schemes $D\in\FS$ that are isomorphic to a closed subscheme of $\A1$. Denote by $\Flag'_{univ}$ the stack of flags
\[(\emptyset=D_0\subset D_1\subset\dots\subset D_n),\]
where $D_i$ is a finite scheme of length $i$ and $D_n\in\FS_1$. The natural morphism $\Flag'_{univ}\to\FS_1$ has an action
of $S_n$, and the map $\psi$ is obtained from it by base change via $\Hilb'_S\to\FS_1$.

The morphism $\Flag'_S\to\Hilb'_S$ is a cameral cover for the group $\GL(n)$ in the sense
of \cite{DG}. Moreover, $\FS_1$ is identified with the stack of cameral covers for $\GL(n)$, and 
$\Flag'_{univ}\to\FS_1$ is the universal cameral cover. 

Consider $\Hilb_{\A1}$ (the Hilbert scheme of finite subschemes $D\subset\A1$ of length $n$).
The natural map $\Hilb_{\A1}\to\FS_1$ is a presentation, so $\FS_1$ is a quotient
of $\Hilb_{\A1}$ by an action of a groupoid. We can identify $\Hilb_{\A1}$ with the affine space of monic degree $n$ polynomials in $\kkk[t]$ (as in Example~\ref{ex:T}). The elements of the groupoid acting on $\Hilb_{\A1}$ are then interpreted as 
\emph{Tschirnhaus transformations} of polynomials. In this way, the stack $\FS_1$ goes back to 
the seventeenth century \cite{Tsch}.
This relation was pointed out to me by V.~Drinfeld.

Now let $\FS_2\subset\FS$ be the open substack of schemes $D$ that admit an embedding into a smooth surface, which may be
assumed to be $\A2$ without loss of generality. 
The natural morphism $\Hilb_{\A2}\to\FS_2$ is a presentation, and the scheme $\IS_{\A2}$ defined by M.~Haiman descend to
a flat finite stack $\IS_{univ}$ over $\FS_2$. We can view $\IS_{univ}$ 
as the universal isospectral Hilbert scheme: for every smooth
surface $S$, we have 
\[\IS_S=\IS_{univ}\times_{\FS_2}\Hilb_S.\]

\section{Poincar\'e sheaf}\label{sc:Q}
We prove Theorem~\ref{th:oP} by constructing $\oP$. Actually, we construct a sheaf not on $\oJ\times\oJ$
but on its smooth cover, and then show that the sheaf descends to $\oJ\times\oJ$.
This is similar to the construction of automorphic sheaves (\cite{Dr}). 

\subsection{Construction of the Poincar\'e sheaf}
Fix an integer $n>0$, and let $\Hilb_C$ be the Hilbert scheme of finite subschemes $D\subset C$ of degree $n$.
Recall that $\Hilb_C$ is an irreducible locally complete intersection of dimension $n$ (\cite{irreducibility}).
For $n\gg 0$, $\Hilb_C$ is a smooth cover of $\oJ$. More precisely, fix a smooth point $p_0\in C$. It
defines an Abel-Jacobi map
\[\alpha:\Hilb_C\to\oJ:D\mapsto \cI_D^\vee(-np_0)=\HOM(\cI_D,O_C(-np_0)).\]
Here $\cI_D$ is the ideal sheaf of $D\subset C$. For $n\gg0$, the map $\alpha:\Hilb_C\to\oJ$ is smooth and 
surjective. 

Our goal is to construct a sheaf $Q$ on $\Hilb_C\times\oJ$, and then show that it descends to a sheaf $\oP$
on $\oJ\times\oJ$ when $n\gg 0$. The construction of $Q$ makes sense even if $n$ is not assumed to be large.

Let $\cF$ be the universal sheaf on $C\times\oJ$. Thus, for every $F\in\oJ$, the restriction $\cF|_{C\times\{F\}}$
is identified with $F$. We normalize $\cF$ by framing it over $p_0$, so we have
\[\cF|_{\{p_0\}\times\oJ}=O_{\{p_0\}\times\oJ}.\]
Now consider the sheaf 
\[\cF_n:=p_{1,n+1}^*\cF\otimes\dots\otimes p_{n,n+1}^*\cF\]
on $C^n\times\oJ$. In other words, $\cF_n$ is the family of sheaves $F^{\boxtimes n}$ on $C^n$ parametrized by 
$F\in\oJ$. The sheaf $\cF_n$ is $S_n$-equivariant in the obvious way.

Choose a closed embedding $\iota:C\hookrightarrow S$ into a smooth surface $S$, and consider the diagram
\[
\xymatrix{
\Hilb_S\times\oJ\ar[d]^-{p_1}&\IS_S\times\oJ\ar[l]_-{\psi\times\id_\oJ}\ar[r]^-{\sigma\times\id_\oJ}&S^n\times\oJ&C^n\times\oJ
\ar[l]_-{\iota^n\times\id_\oJ}\\
\Hilb_S.}
\]
Set
\begin{equation}\label{eq:Q}
Q:=\left((\psi\times\id_\oJ)_*(\sigma\times\id_\oJ)^*(\iota^n\times \id_\oJ)_*\cF_n\right)^{sign}\otimes p_1^*\det(\cA)^{-1}.
\end{equation}
Here the upper index `sign' stands for the space of anti-invariants with respect to the action of $S_n$. 
Recall that $\det\cA$ is the line bundle on $\Hilb_S$ whose fiber over $D\in\Hilb_S$ is $\det(\kkk[D])$.

Note that \eqref{eq:Q} defines a sheaf on $\Hilb_S\times\oJ$. Let us identify $\Hilb_C$ with a closed subscheme of 
$\Hilb_S$ using $\iota$. The following claim shows that 
\[\supp(Q)\subset\Hilb_C\times\oJ.\] 
This is not immediate, because $\psi(\sigma^{-1}(\iota(C)^n))\not\subset\Hilb_C$.

\begin{proposition}\label{pp:FCM} 
As above, $\iota:C\hookrightarrow S$ is a closed embedding of a reduced curve $C$ into a smooth 
surface $S$ (it is not necessary to assume that $C$ is projective or irreducible). Let $F$ be a torsion-free sheaf of
generic rank one on $C$, and consider the sheaf $(\iota_*F)^{\boxtimes n}$ on $S^n$. Then
\begin{enumerate}
\item\label{pp:FCM1} $L\sigma^* (\iota_*F)^{\boxtimes n}=\sigma^*(\iota_*F)^{\boxtimes n}$;
\item\label{pp:FCM2} $\sigma^*(\iota_*F)^{\boxtimes n}$ is Cohen-Macaulay of codimension $n$;
\item\label{pp:FCM3} $\psi_*(\sigma^*(\iota_*F)^{\boxtimes n})^{sign}$ is supported by the subscheme $\Hilb_C\subset\Hilb_S$. 
\end{enumerate}
\end{proposition}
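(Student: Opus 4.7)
The proposition can be verified locally on $S$, so I may assume $S=\A2$ throughout. My first preliminary step is to check that $M:=(\iota_*F)^{\boxtimes n}$ is itself Cohen--Macaulay of codimension $n$ on $S^n$. Because $F$ is a torsion-free coherent sheaf of generic rank one on the reduced Cohen--Macaulay curve $C$, each stalk $F_x$ has depth one over $O_{C,x}$, so $F$ is maximal Cohen--Macaulay on $C$; consequently $\iota_*F$ is Cohen--Macaulay of codimension one on $S$ with scheme-theoretic support $C$. Iterating Lemma~\ref{lm:CMFamily}\eqref{lm:CMFamily1} one $S$-factor at a time shows that $M$ is Cohen--Macaulay of codimension $n$ on the smooth variety $S^n$, with scheme-theoretic support $C^n$.

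For parts \eqref{pp:FCM1} and \eqref{pp:FCM2} I would invoke the theorem of Haiman (Proposition~\ref{pp:IS}) together with his polygraph flatness results from \cite{Haiman}: the isospectral Hilbert scheme $\IS_S$ is Cohen--Macaulay of pure dimension $2n$ (being finite flat over the smooth scheme $\Hilb_S$), and the projection $\sigma:\IS_S\to S^n$ has finite Tor-dimension. Given these inputs, $L\sigma^*M=\sigma^*M$, giving part \eqref{pp:FCM1}; and Tor-finite pullback between Cohen--Macaulay schemes of equal dimension preserves the codimension of a Cohen--Macaulay sheaf (essentially Lemma~\ref{lm:CMAcyclic}\eqref{lm:CMAcyclic2}), so $\sigma^*M$ is Cohen--Macaulay of codimension $n$ on $\IS_S$, giving part \eqref{pp:FCM2}.

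Part \eqref{pp:FCM3} is the geometric heart of the proposition. By parts (1)--(2), $\psi_*(\sigma^*M)$ is Cohen--Macaulay of pure codimension $n$ on $\Hilb_S$, so the support of its sign-isotypic summand is automatically a union of top-dimensional irreducible components of $T:=\{D\in\Hilb_S\mid D_{\mathrm{red}}\subset C\}$. Now $T$ is strictly larger than $\Hilb_C$: it carries additional dimension-$n$ components parametrizing ``transverse'' fat points (for instance length-two schemes supported at a smooth point $x\in C$ whose tangent line is not $T_xC$), so it suffices to check vanishing of the sign-isotypic stalk at a generic point $D$ of each non-$\Hilb_C$ component. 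For such a $D$, I would work in a formal neighborhood and reduce to the case where $D$ is concentrated at a single point $x\in C$ with local coordinates $(u,v)$ and local equation $f\in O_{S,x}$ for $C$. The pulled-back functions $f_1,\dots,f_n\in O_{\IS_S}$ are permuted by $S_n$, and each annihilates $\sigma^*M$ since $M$ has scheme-theoretic support $C^n$. The plan is to produce an $S_n$-equivariant Koszul-type syzygy expressing a generator of the sign-isotypic part of $(\psi_*\sigma^*M)_D$ as a combination $\sum_i f_i\cdot(\cdot)$, which then vanishes.

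The main obstacle is precisely this last syzygy argument. The $f_i$ do not form a regular sequence on $\IS_S$---their common scheme-theoretic vanishing locus $\sigma^{-1}(C^n)$ only has codimension $n$ generically---so a naive Koszul resolution is unavailable, and one must invoke the deeper structure of $\IS_S$ away from $\Hilb'_S$. I expect the cleanest route is either to extend the description of Lemma~\ref{lm:Norm} from $\Hilb'_S$ to all of $\Hilb_S$ (which the text notes is not straightforward) or to use Ginzburg's $D$-module construction of $\IS_S$ to identify the sign-isotypic summand of $\psi_*(\sigma^*M)$ with a determinantal construction on $\Hilb_S$ whose vanishing outside $\Hilb_C$ can be read off from the nilpotency of $f|_D$ in the Artinian algebra $\kkk[D]$.
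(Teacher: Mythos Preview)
Your approach to parts \eqref{pp:FCM1}--\eqref{pp:FCM2} is close to the paper's, but you have skipped the crucial step. Lemma~\ref{lm:CMAcyclic} is stated only for \emph{maximal} Cohen--Macaulay sheaves, and $(\iota_*F)^{\boxtimes n}$ is not maximal on $S^n$; the bare fact that $\sigma$ has finite Tor-dimension (automatic since $S^n$ is smooth) does not by itself force $L\sigma^*M=\sigma^*M$ for a codimension-$n$ Cohen--Macaulay sheaf. The paper supplies the missing ingredient: Corollary~\ref{co:HilbS0} gives the dimension count $\codim(\sigma^{-1}(C^n))=n$, so the regular sequence cutting out $C^n\subset S^n$ remains regular after pullback to $\IS_S$. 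This says $\sigma^{-1}(C^n)\to C^n$ itself has finite Tor-dimension, and one then applies Lemma~\ref{lm:CMAcyclic} to the \emph{maximal} Cohen--Macaulay sheaf $F^{\boxtimes n}$ on $C^n$. Your invocation of ``Haiman's polygraph flatness results'' is not what is needed here.

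For part \eqref{pp:FCM3} there is a genuine gap, and your last paragraph correctly diagnoses that your proposed syzygy argument does not go through. What you are missing is that the same dimension count (Corollary~\ref{co:HilbS0}) shows that \emph{every} top-dimensional component of $\psi(\sigma^{-1}(C^n))$ has its generic point inside the nice open set $Z=\Hilb'_S\cap\psi(\sigma^{-1}((C^{sm})^n))$: generically the underlying cycle consists of distinct smooth points of $C$, and the fibers of $\HC$ over such cycles are products of punctual Hilbert schemes whose top-dimensional loci are curvilinear. Since the sign-isotypic summand is Cohen--Macaulay of codimension $n$, its support is determined by its restriction to $Z$. But over $Z$ one has $\IS_S=\Flag'_S$, and Proposition~\ref{pp:QandQ'} identifies the stalk at $D\in Z$ with a quotient of $\bigwedge^n H^0(D,\iota_*F)$; if $D\not\subset C$ then $\dim H^0(D,\iota_*F)<n$ and this vanishes. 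No analysis of $\IS_S$ over the non-curvilinear locus, and no Koszul syzygy, is required. Your proposed route via Lemma~\ref{lm:Norm} on all of $\Hilb_S$ or via Ginzburg's construction is unnecessary.
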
 

It is also not hard to check that $Q$ given by formula \eqref{eq:Q} agrees with $P$ in the following sense.
Let $\Hilb'_C\subset\Hilb_C$ (resp. $\Hilb''_C\subset\Hilb_C$) be the open subscheme parametrizing $D\in\Hilb_C$ such that
$D$ is isomorphic to a subscheme of $\A1$ (resp. $D$ is reduced and contained in the smooth locus $C^{sm}\subset C$).
Note that $\Hilb'_C=\Hilb'_S\cap\Hilb_C$ and $\Hilb''_C\subset\Hilb'_C$. Also, note that $\Hilb''_C$ is dense in $\Hilb_C$
(because $\Hilb_C$ is irreducible) and $\Hilb'_C\subset\Hilb_C$ is a complement of subset of codimension at least two
(this is easy to see from Corollary~\ref{co:HilbS0}).

\begin{lemma}\label{lm:QandP} The restrictions of the sheaves $Q$ and $(\alpha\times\id_\oJ)^*P$ to
$(\Hilb''_C\times\oJ)\cup(\Hilb'_C\times J)$ are naturally isomorphic.
\end{lemma}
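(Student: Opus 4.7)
The plan is to construct the isomorphism separately on the two opens $\Hilb''_C \times \oJ$ and $\Hilb'_C \times J$, and then check that the two constructions agree on the overlap $\Hilb''_C \times J$.

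On $\Hilb''_C \times \oJ$ the situation is completely explicit. Over $\Hilb''_C$ the map $\psi$ restricts to a finite \'etale $S_n$-torsor, and $\sigma$ identifies its source with an open subset of $(C^{sm})^n \subset S^n$. Under this identification $(\sigma\times\id_\oJ)^*(\iota^n\times\id_\oJ)_*\cF_n$ is simply the restriction of $\cF_n$ to $(C^{sm})^n\times\oJ$, so pushing forward along $\psi$, extracting sign-anti-invariants, and twisting by $\det(\cA)^{-1}$ yields a line bundle whose fiber at $(\{x_1,\dots,x_n\},F)$ is canonically $\bigotimes_i F_{x_i}$. On the other hand $\alpha(D)=\cO_C(\sum x_i-np_0)$, so the explicit formula for $P$ displayed immediately after \eqref{eq:Poincare}, together with the framing $\cF|_{\{p_0\}\times\oJ}=\cO$, gives the same fiber $\bigotimes_i F_{x_i}$ for $(\alpha\times\id_\oJ)^*P$. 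The matching is functorial in $(D,F)$ and provides the isomorphism on this piece.

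On $\Hilb'_C \times J$ both sheaves are invertible: the target by construction, and the source by Proposition~\ref{pp:FCM} combined with the norm-quotient description of $\psi_*\cO_{\Flag'_S}$ in Lemma~\ref{lm:Norm}. I would construct the isomorphism by reducing to the case $F=\cO_C$, using the universal line bundle on $C\times J$ (with its framing at $p_0$) to twist back and forth between an arbitrary $F\in J$ and the trivial bundle. When $F=\cO_C$, Lemma~\ref{lm:Norm} identifies $\psi_*\cO_{\Flag'_S}$ with $(\cA^{\otimes n})_\cN$; the sign-anti-invariants paired with the $\det(\cA)^{-1}$-twist then produce a canonical trivialization of $Q|_{\Hilb'_C\times\{\cO_C\}}$ that matches the canonical trivialization of $(\alpha\times\id_\oJ)^*P|_{\Hilb'_C\times\{\cO_C\}}$ coming from \eqref{eq:Poincare}.

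Compatibility on the overlap $\Hilb''_C \times J$ is then immediate, since both constructions restrict to the fiberwise formula $\bigotimes_i F_{x_i}$ with matching signs. The main obstacle is the construction on $\Hilb'_C \times J$: there $D$ may be non-reduced (e.g.\ $D=\mathrm{Spec}\,\kkk[x]/(x^k)$), and the elementary enumeration of support points is no longer available, so one must extract the isomorphism from the norm character $\cN$ in Lemma~\ref{lm:Norm}, carefully matching it against the $\detrg$-factors appearing in the definition \eqref{eq:Poincare} of $P$ and tracking the sign-representation conventions used in the anti-invariant construction.
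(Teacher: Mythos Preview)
Your approach is broadly correct, but the paper takes a more unified and shorter route. Instead of treating the two opens separately, the paper deduces Lemma~\ref{lm:QandP} in one stroke from Proposition~\ref{pp:QandQ'}: Lemma~\ref{lm:Norm} is applied once, uniformly over all of $\Hilb'_S\times\oJ$, to identify $Q$ with the sheaf $Q'$ whose fibre at $(D,F)$ is
\[
Q'_{(D,F)}=\bigl(\textstyle\bigwedge^n H^0(D,\iota_*F)\otimes(\det\kkk[D])^{-1}\bigr)_{\kkk[D]^\times}.
\]
Both $\Hilb''_C\times\oJ$ and $\Hilb'_C\times J$ sit inside $\Hilb'_S\times\oJ$, so it only remains to match this fibre against $P_{(\alpha(D),F)}$. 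On $\Hilb''_C\times\oJ$ the fibre is $\bigotimes_i F_{x_i}$, exactly your computation; on $\Hilb'_C\times J$ the $\kkk[D]$-module $L|_D$ is free of rank one, so the fibre is the norm line $\det(L|_D)\otimes\det(\kkk[D])^{-1}$, which one checks against \eqref{eq:Poincare} directly.

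What your argument does differently is the handling of $\Hilb'_C\times J$: you invoke Lemma~\ref{lm:Norm} only to establish invertibility, and then propose a separate reduction to $F=O_C$ by ``twisting back and forth''. This detour is unnecessary and not fully justified as stated: the twist $\sigma^*L^{\boxtimes n}$ is an $S_n$-equivariant line bundle on $\psi^{-1}(\Hilb'_C)$, but to pull it through $\psi_*$ and the anti-invariant functor you need it to be of the form $\psi^*(\cdot)$, which requires an additional descent argument (along the Hilbert--Chow map for the singular curve $C$). Carrying that out carefully is more work than simply using the $Q'$ formula for general $L$, which you already have in hand once Lemma~\ref{lm:Norm} is invoked.
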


We postpone the proof of Proposition~\ref{pp:FCM} until Section~\ref{sc:proof}; Lemma~\ref{lm:QandP} follows from
Proposition~\ref{pp:QandQ'} below. Let us show that Proposition~\ref{pp:FCM} and Lemma~\ref{lm:QandP}
imply Theorem~\ref{th:oP}.

\begin{proof}[Proof of Theorem~\ref{th:oP}]
Proposition~\ref{pp:FCM}\eqref{pp:FCM1} implies that $Q$ is flat over $\oJ$. By 
Proposition~\ref{pp:FCM}\eqref{pp:FCM2},
$Q$ is a flat $\oJ$-family of Cohen-Macaulay sheaves of codimension $n$ on $\Hilb_S$. Therefore, $Q$ is a 
Cohen-Macaulay sheaf of codimension $n$ on $\Hilb_S\times\oJ$ by Lemma~\ref{lm:CMFamily}.

By Proposition~\ref{pp:FCM}\eqref{pp:FCM3}, the restriction $Q|_{\Hilb_S\times\{F\}}$ is supported by
$\Hilb_C\times\{F\}$ for every $F\in\oJ$. Therefore, $Q$ is a maximal Cohen-Macaulay sheaf on $\Hilb_C\times\oJ$.

Finally, consider the Abel-Jacobi map $\alpha:\Hilb_C\to\oJ$ for $n\gg 0$. By Lemma~\ref{lm:QandP}, $Q$ coincides 
with the pullback $(\alpha\times\id_\oJ)^*P$ on the complement of a subset of codimension at least two. It follows
that $Q$ descends to $\oJ\times\oJ$: we can extend the descent data across codimension two using Lemma~\ref{lm:CMExt}.
We thus obtain a sheaf $\oP$ on $\oJ\times\oJ$. It is clear that $\oP$ has the properties required by 
Theorem~\ref{th:oP}.
\end{proof}

\subsection{Restriction to $\Hilb'_S\subset\Hilb_S$}\label{sc:QonHilb'}
The rest of this section contains comments on the formula \eqref{eq:Q}.

Recall that $\Hilb'_S\subset\Hilb_S$ is the open subscheme of $D\in\Hilb_S$ such that $D$ is isomorphic to a
subscheme of $\A1$. Over $\Hilb'_S$, we can identify $\IS_S$ with the space of flags $\Flag'_S$. In this way,
\eqref{eq:Q} is more explicit if we are only interested in the restriction 
$Q|_{\Hilb'_S\times\oJ}$. To make the formula more concrete, let us fix $F\in\oJ$. 

\begin{lemma}\label{lm:QviaFlag'} Consider the diagram
\[
\xymatrix{
\Hilb'_S&\Flag'_S\ar[l]_-\psi\ar[r]^-\sigma&S^n&C^n\ar[l]_-{\iota^n}.}
\]
For every $F\in\oJ$, we have
\[
Q|_{\Hilb'_S\times\{F\}}=\left(\psi_*\sigma^*(\iota^n)_* F^{\boxtimes n}\right)^{sign}\otimes \det(\cA)^{-1}.
\]
\end{lemma}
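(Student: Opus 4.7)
The plan is to obtain the formula by performing two successive restrictions of the defining expression \eqref{eq:Q} for $Q$ and to verify that each operation in the construction commutes with the relevant base change.

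First, I restrict to $\Hilb'_S\times\oJ\subset\Hilb_S\times\oJ$. By Proposition~\ref{pp:IS}, the square involving $\IS_S$ and $\Hilb_S$ is Cartesian, so the base change of $\psi\times\id_\oJ$ along the open embedding $\Hilb'_S\times\oJ\hookrightarrow\Hilb_S\times\oJ$ is the morphism $\Flag'_S\times\oJ\to\Hilb'_S\times\oJ$ induced by $\psi$. Flat base change (for the open embedding) allows me to replace $\IS_S$ by $\Flag'_S$ everywhere in \eqref{eq:Q}, since both $\sigma^*$ and the sign-component and tensoring with $p_1^*\det(\cA)^{-1}$ manifestly commute with this restriction.

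Next, I restrict to $\Hilb'_S\times\{F\}$. The universal sheaf $\cF$ is flat over $\oJ$, so $\cF_n$ is flat over $\oJ$; the closed embedding $\iota^n\times\id_\oJ$ gives an exact pushforward that commutes with arbitrary base change, so $(\iota^n\times\id_\oJ)_*\cF_n$ restricts to $(\iota^n)_*F^{\boxtimes n}$ on $S^n\times\{F\}$ and remains flat over $\oJ$. By Proposition~\ref{pp:FCM}\eqref{pp:FCM1}--\eqref{pp:FCM2} and Lemma~\ref{lm:CMFamily}\eqref{lm:CMFamily1}, the sheaf $(\sigma\times\id_\oJ)^*(\iota^n\times\id_\oJ)_*\cF_n$ is again a flat $\oJ$-family of Cohen-Macaulay sheaves, so its restriction to $\{F\}$ is the underived pullback $\sigma^*(\iota^n)_*F^{\boxtimes n}$, with no higher Tor terms appearing.

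Finally, the map $\psi\colon\Flag'_S\to\Hilb'_S$ is finite and flat (of degree $n!$), hence $(\psi\times\id_\oJ)_*$ commutes with base change along $\{F\}\hookrightarrow\oJ$. The $S_n$-action is intrinsic to the construction, so extracting the sign-isotypic summand commutes with restriction, as does tensoring with the line bundle $p_1^*\det(\cA)^{-1}$, which pulls back to $\det(\cA)^{-1}$ on $\Hilb'_S\times\{F\}\cong\Hilb'_S$. Assembling these compatibilities yields the claimed identification. There is really no obstacle beyond bookkeeping: the single nontrivial input, namely the absence of higher Tor contributions in the two pullbacks, is ensured by the flatness of $\cF$ over $\oJ$ together with Proposition~\ref{pp:FCM}\eqref{pp:FCM1}.
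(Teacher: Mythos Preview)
Your argument is correct and simply unpacks what the paper records as ``Clear'': each operation in \eqref{eq:Q} (pushforward along the affine maps $\iota^n$ and $\psi$, pullback along $\sigma$, taking the sign-summand, tensoring by a line bundle) commutes with restriction to $\Hilb'_S\times\{F\}$ for purely formal reasons. Your appeal to Proposition~\ref{pp:FCM} is in fact unnecessary here, since the lemma concerns only ordinary (non-derived) restriction, and pullbacks compose on the nose while affine pushforward commutes with arbitrary base change.
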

\begin{proof} Clear.
\end{proof}

We can also rewrite the formula for $Q|_{\Hilb'_S\times\oJ}$ using Lemma~\ref{lm:Norm}. Use the 
diagram
\[
\xymatrix{
\Hilb_S\times\oJ\ar[d]^-{p_1}&\cD\times\oJ\ar[l]_-{h\times\id_\oJ}\ar[r]^-{g\times\id_\oJ}&S\times\oJ&C\times\oJ
\ar[l]_-{\iota\times\id_\oJ}\\
\Hilb_S}
\]
to define the sheaf
\[Q':=\left((\bigwedge\nolimits^n (h\times\id_\oJ)_*(g\times\id_\oJ)^*(\iota\times\id_\oJ)_*\cF))\otimes p_1^*\det(\cA)^{-1}\right)_{p_1^{-1}(\cA^\times)}.\]
Recall that $\cF$ is the universal sheaf on $C\times\oJ$; see Section~\ref{sc:Flag} for definitions of the
remaining objects. Explicitly, the fiber of $Q'$ over $(D,F)\in\Hilb_S\times\oJ$ equals
\[Q'_{(D,F)}=\left((\bigwedge\nolimits^n H^0(D,\iota_*F))\otimes(\det\kkk[D])^{-1}\right)_{\kkk[D]^\times}.\]
Up to the twist by $\det\kkk[D]^{-1}$, the fiber is the largest quotient of $\bigwedge^n H^0(D,\iota_*F)$
on which $\kkk[D]^\times$ acts by the norm character. (Notice the similarity with \cite[Lemma~3]{Dr}.)

\begin{proposition} \label{pp:QandQ'}
The restrictions of $Q$ and $Q'$ to $\Hilb'_S\times\oJ$ are naturally isomorphic.
\end{proposition}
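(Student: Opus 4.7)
The plan is to show that both $Q|_{\Hilb'_S\times\oJ}$ and $Q'|_{\Hilb'_S\times\oJ}$ are canonically isomorphic to the common explicit sheaf
\[
M := \bigl(\bigwedge\nolimits^n h_*\tilde\cF\bigr)_\cN \otimes p_1^*\det\cA^{-1},
\qquad
\tilde\cF := (g\times\id_\oJ)^*(\iota\times\id_\oJ)_*\cF,
\]
where $(\cdot)_\cN$ denotes the maximal quotient on which the diagonal $\cA^\times$-action coincides with the norm character $\cN$. For $Q'$ this is essentially immediate: the subscript $p_1^{-1}(\cA^\times)$ denotes the coinvariant quotient, and since $\cA^\times$ acts on the line bundle $\det\cA^{-1}$ via $\cN^{-1}$, these coinvariants of $\bigl(\bigwedge^n h_*\tilde\cF\bigr)\otimes\det\cA^{-1}$ coincide with $\bigl(\bigwedge^n h_*\tilde\cF\bigr)_\cN \otimes\det\cA^{-1}$.

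For $Q$, the plan is to apply Lemma~\ref{lm:Norm} ``with coefficients in $\cF$.'' Over $\Hilb'_S$ we have $\IS_S = \Flag'_S$, and $\psi$ factors as
\[
\Flag'_S \xrightarrow{\ i\ } \cD_n|_{\Hilb'_S} \xrightarrow{\ h_n\ } \Hilb'_S,
\qquad \sigma = g_n\circ i,
\]
where $\cD_n := \cD\times_{\Hilb_S}\cdots\times_{\Hilb_S}\cD$ and $h_n$, $g_n$ are the structural projections to $\Hilb_S$ and $S^n$ respectively. Since $h$ is finite flat, so is $h_n$, and Künneth with flat base change yields
\[
(h_n\times\id_\oJ)_*(g_n\times\id_\oJ)^*(\iota^n\times\id_\oJ)_*\cF_n = (h_*\tilde\cF)^{\otimes n}
\]
as $\cA^{\otimes n}$-modules on $\Hilb_S\times\oJ$. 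Because $h_n$ is affine, its pushforward carries tensor products over $O_{\cD_n}$ to tensor products over $\cA^{\otimes n}$, so using $\psi = h_n\circ i$ and Lemma~\ref{lm:Norm},
\[
\psi_*\sigma^*(\iota^n\times\id_\oJ)_*\cF_n
= (h_*\tilde\cF)^{\otimes n}\otimes_{\cA^{\otimes n}}\psi_*O_{\Flag'_S}
= \bigl((h_*\tilde\cF)^{\otimes n}\bigr)_\cN.
\]
Taking $S_n$-antiinvariants---which commutes with $(\cdot)_\cN$ in characteristic zero, since the defining norm relations are $S_n$-equivariant---and tensoring with $\det\cA^{-1}$ yields $M$.

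The principal obstacle is to justify the use of underived operations throughout, given that $g_n$ is not flat and $(\iota^n\times\id_\oJ)_*\cF_n$ is not locally free on $S^n\times\oJ$. The identity $\sigma^*(\iota^n\times\id_\oJ)_*\cF_n = i^*g_n^*(\iota^n\times\id_\oJ)_*\cF_n$ agrees with the derived pullback by Proposition~\ref{pp:FCM}\eqref{pp:FCM1}, and the affineness of $h_n$ provides an equivalence of tensor categories between quasicoherent $O_{\cD_n}$-modules and quasicoherent $\cA^{\otimes n}$-modules on $\Hilb_S$, which is what one needs to transport the tensor identity to the $\cA^{\otimes n}$-module side.
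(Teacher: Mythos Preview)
Your proof is correct and is precisely an unpacking of the paper's one-line argument, which reads in full: ``Follows from Lemma~\ref{lm:Norm}.'' The last paragraph's appeal to Proposition~\ref{pp:FCM}\eqref{pp:FCM1} is harmless but unnecessary: both $Q$ and $Q'$ are defined with underived operations, and the K\"unneth identity $(h_n)_*g_n^*(\iota_*F)^{\boxtimes n}\simeq(h_*g^*\iota_*F)^{\otimes n}$ is purely formal module theory over the finite locally free $O_{\Hilb_S}$-algebra $\cA$.
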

\begin{proof} Follows from Lemma~\ref{lm:Norm}.
\end{proof} 

Lemma~\ref{lm:QandP} follows immediately from Proposition~\ref{pp:QandQ'}.

\subsection{Curves with double singular points} As explained in Section~\ref{sc:QonHilb'}, the restriction
$Q|_{\Hilb'_S\times\oJ}$ is more explicit than $Q$ itself. 

Suppose that all singularities of $C$ are at most double points. (So that at every point $c\in C$, there exists 
$f\in O_c$ such that $\length(O_c/fO_c)=2$.) 

\begin{proposition} \label{pp:alpha'}
For $n\gg 0$, the morphism $\alpha:\Hilb'_C\to\oJ$ is surjective.
\end{proposition}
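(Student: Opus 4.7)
The plan is to analyze fiber-wise the Abel-Jacobi map and exploit the double-point hypothesis in a local computation at each singular point.

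For $n\gg 0$, standard boundedness on the compactified Jacobian gives that $\alpha:\Hilb_C\to\oJ$ is smooth and surjective, and $F(np_0)$ is globally generated with $H^1(C,F(np_0))=0$ uniformly in $F\in\oJ$. The fiber $\alpha^{-1}(F)$ is then canonically $\mathbb{P}(H^0(C,F(np_0)))$: a nonzero section $s$ determines an injection $O_C\hookrightarrow F(np_0)$, and the associated $D\in\Hilb_C$ is defined by $\cI_D=F(np_0)^\vee$ via the dual map $\phi\mapsto\phi(s)$ (using reflexivity of $F(np_0)$ on the Gorenstein curve $C$). Since $D$ is automatically curvilinear at smooth points of $C$ and the singular set of $C$ is finite, it suffices to show, for each singular point $c$, that the locus of $s\in\mathbb{P}(H^0(F(np_0)))$ producing a non-curvilinear $D$ at $c$ is a proper closed subvariety; closedness follows from upper-semicontinuity of $\dim T_cD(s)$, and the finite intersection of the nonempty open complements yields the desired good section.

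Fix a singular point $c$ and write $M:=F(np_0)_c$. If $M$ is locally free over $O_c$, global generation gives a nonempty open of $s$ with $s_c$ a generator of $M$, so $c\notin\supp D$ and the curvilinearity condition at $c$ is vacuous. If $M$ is non-locally-free (so $c\in\supp D$ always), I would use the double-point hypothesis as follows: realize $M$ as a fractional ideal in $\mathrm{Frac}(O_c)$, scale so that $s_cO_c\subset M\subset O_c$, and apply length additivity
\[
\ell(M/s_cO_c)=\ell(O_c/s_cO_c)-\ell(O_c/M).
\]
The double-point hypothesis says $O_c$ has Hilbert-Samuel multiplicity $2$, so $\ell(O_c/s_cO_c)=2$ for generic $s_c\in\mathfrak{m}_c$; and the delta-invariant $\delta(c)=1$ forces $\ell(O_c/M)=1$ for a suitably chosen representative of the isomorphism class of $M$. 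Hence $\ell(M/s_cO_c)=1$, and by Grothendieck duality $\ell(O_{D,c})=1$, so $D$ is a reduced point at $c$, in particular curvilinear.

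The main obstacle will be to verify both length equalities uniformly across all isomorphism classes of non-free rank-one MCM modules at an $A_k$-type planar double point. At $A_1$ (node) and $A_2$ (cusp) the non-free class is unique and isomorphic to $\mathfrak{m}_c$, so the computation is immediate. For $A_k$ with $k\ge 3$ several non-isomorphic non-free classes exist (including the normalization $O_c^{\mathrm{norm}}$), and choosing a fractional-ideal representative with the required colength in $O_c$ requires care; one handles this either case-by-case from the MCM-classification at $A_k$-singularities or uniformly by observing that every non-free rank-one MCM at a double point fits into a chain between $O_c$ and $O_c^{\mathrm{norm}}$ whose consecutive quotients have length at most $\delta(c)=1$. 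Once the local input is established, the global argument assembles immediately: intersecting the nonempty open good loci over the finitely many singular points produces, for each $F\in\oJ$, a section yielding $D\in\Hilb'_C\cap\alpha^{-1}(F)$, proving the surjectivity.
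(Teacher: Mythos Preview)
Your overall strategy---identify the fiber $\alpha^{-1}(F)$ with $\mathbb P(H^0(F(np_0)))$ and show that the non-curvilinear locus is a proper closed subset, one singular point at a time---is sound and is also the paper's strategy. The gap is in your local computation. You repeatedly invoke $\delta(c)=1$, but ``double point'' here means multiplicity two (there exists $f$ with $\length(O_c/fO_c)=2$), which covers all $A_k$; for $k\ge 3$ one has $\delta(c)=\lfloor(k+1)/2\rfloor>1$. Your target $\length(O_{D,c})=1$ is then simply false: at a tacnode ($A_3$) with $F_c\cong\tilde O_c$, the minimal-colength fractional-ideal representative is the conductor $\mathfrak c$, which has colength $2$ in $O_c$, and the generic section yields $\cI_{D,c}=\mathfrak c$, so $\length(O_{D,c})=2$. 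Rescaling cannot help, since $\length(O_c/s_cM^\vee)$ is invariant under $M\mapsto\lambda M$; your companion claim $\length(O_c/s_cO_c)=2$ for generic $s_c\in M$ also fails once the embedding forces $M$ deep into $\mathfrak m_c$.

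What you actually need is the weaker condition $\cI_{D,c}\not\subset\cI_c^2$ (curvilinearity), which \emph{does} hold in the tacnode example above. The paper reaches it with no case analysis by a global trick: at each singular $p$ choose an \emph{invertible} ideal $\cI^{(2)}_p\subset O_C$ of degree $-2$ with $\cI^{(2)}_p\supset\cI_p^2$---this is precisely where multiplicity two is used, since a generic $f\in\mathfrak m_p$ has $\length(O_p/fO_p)=2$ and $(f)\supset\mathfrak m_p^2$. Then $\cI_D\not\subset\cI^{(2)}_p$ already forces curvilinearity at $p$, and because $\cI^{(2)}_p$ is a line bundle the bad locus $\{\phi:\phi(O(-np_0))\subset F\otimes\cI^{(2)}_p\}$ is linear of dimension $n-g-1$ by Riemann--Roch, two less than $h^0(F(np_0))=n-g+1$. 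Intersecting the open complements over the finitely many singular points gives the generic good section. Your length bookkeeping could in principle be upgraded to prove $\cI_{D,c}\not\subset\cI_c^2$ directly, but that requires a genuine argument valid for every rank-one MCM class at every $A_k$, which you have not supplied.
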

\begin{proof} 
Let $C^{sing}$ be the singular locus of $C$. For every $p\in C^{sing}$, choose an invertible 
subsheaf $\cI^{(2)}_p\subset O_C$ of degree $-2$ such that $\cI^{(2)}_p\supset\cI_p^2$. Such $\cI^{(2)}_p$ 
exists because $C$ has only double singular points.

Consider $D\in\Hilb_C$. Then $D\in\Hilb'_C$ if and only if $\cI_D\not\subset\cI_p^2$ for every $p\in C^{sing}$. In particular,
$D\in\Hilb'_C$ provided $\cI_D\not\subset\cI^{(2)}_p$ for all $p\in C^{sing}$. 

Now take $F\in\oJ$ and $n\ge 2g+1$. Recall that $p_0\in C$ is the smooth point used to define the Abel-Jacobi map
$\alpha:\Hilb_C\to\oJ$.
Choose a non-zero morphism $\phi:O(-np_0)\to F$. By the Riemann-Roch Theorem, the space of such morphisms $\phi$ has dimension
$n-g+1$. Then $F=\alpha(D)$, where $\cI_D\subset O_C$
is the image of 
\[\phi^\vee:F^\vee(-np_0)\to O_C.\]  
For fixed $p\in C^{sing}$, the space of morphisms $\phi$ such that $\phi(O(-np_0))\subset F\otimes\cI^{(2)}_p$ has dimension
$n-g-1$ by the Riemann-Roch Theorem. Thus, if $\phi$ is generic, we have $\phi(O(-np_0))\not\subset F\otimes\cI^{(2)}_p$ for all
$p\in C^{sing}$, and $D\in\Hilb'_C$.
\end{proof}

By Proposition~\ref{pp:alpha'}, we see that the sheaf $\oP$ on $\oJ\times\oJ$ can be constructed as the descent of
$Q|_{\Hilb'_S\times\oJ}$, assuming $C$ has at most double singular points. Thus, in this case it is possible to describe
$\oP$ without using isospectral Hilbert schemes. 

\begin{remark*} Suppose the singularities of $C$ are arbitrary planar, and denote by $\oJ'\subset\oJ$ the image
$\alpha(\Hilb'_S)$ for $n\gg 0$. It is easy to see that for $n\gg 0$, the image does not depend on $n$ or the choice of $p_0$. The restriction $\oP|_{\oJ'\times\oJ}$ can be constructed without using isospectral Hilbert schemes.
\end{remark*}

\subsection{Independence of embedding} The definition of $Q$ involves the embedding $\iota:C\to S$, and the argument
of Section~\ref{sc:proof} relies on the properties of the Hilbert scheme $\Hilb_S$. On the other hand, it is not hard
to see that the restriction $Q|_{\Hilb_C\times\oJ}$ is independent of this embedding. By Proposition~\ref{pp:FCM}\eqref{pp:FCM3}
this restriction coincides with $Q$.

Let us provide a formula for $Q|_{\Hilb_C\times\oJ}$. Set $\IS_C:=\psi^{-1}\Hilb_C\subset\IS_S$. The morphisms 
$\psi:\IS_C\to\Hilb_C$ and $\sigma:\IS_C\to C^n$ are $S_n$-equivariant for the natural actions, and $\psi$ is a degree $n!$ 
finite flat morphism. It is easy to see that $\IS_C$ does not depend on $\iota:C\to S$. 
Indeed, $\IS_C$ is obtained
from $\IS_{univ}$ by base change $\Hilb_C\to\FS_2$ (see Section~\ref{sc:Tsch}). For another explanation, note that the preimage
$\psi^{-1}(\Hilb'_C)$ is identified with the moduli space $\Flag'_C$ of flags of finite subschemes in $C$, and $\IS_{univ}$
can be viewed as its extension to a finite flat scheme over $\Hilb_C$. Such an extension is unique because 
$\codim(\Hilb_C-\Hilb'_C)\ge 2$ and $\Hilb_C$ is Gorenstein.

Now consider the diagram
\[
\xymatrix{
\Hilb_C\times\oJ\ar[d]^-{p_1}&\IS_C\times\oJ\ar[l]_-{\psi\times\id_\oJ}\ar[r]^-{\sigma\times\id_\oJ}&C^n\times\oJ\\
\Hilb_C.}
\]
Clearly,
\[
Q|_{\Hilb_C\times\oJ}:=\left((\psi\times\id_\oJ)_*(\sigma\times\id_\oJ)^*\cF_n\right)^{sign}\otimes p_1^*\det(\cA)^{-1}.
\]

Similarly, one can describe the restriction $Q|_{\Hilb'_C\times\oJ}$ without choosing $\iota:C\hookrightarrow S$ by rewriting 
the formulas of Section~\ref{sc:QonHilb'}. We leave this description to the reader.

\subsection{Poincar\'e sheaf and automorphic sheaves}\label{sc:langlands}
As we already mentioned, the formula for $\oP$ can be interpreted as a classical limit of V.~Drinfeld's
formula for automorphic sheaves for $\GL(2)$ (\cite{Dr}). Let us sketch the relation. 

Let $X$ be a smooth projective absolutely irreducible curve over a finite field, and let $\cE$ be a geometrically
irreducible $\ell$-adic local system on $X$. In \cite{Dr}, V.~Drinfeld constructs an automorphic 
perverse sheaf $\Aut_\cE$ on the moduli stack $\Bun_2$ of rank two vector bundles on $X$. 

We now apply two `transformations' to this construction. Firstly, let us assume that $X$ is a 
(smooth projective connected) curve over a field $\kkk$ of characteristic zero, which we assume to be algebraically closed
for simplicity. Also, let us replace perverse sheaves with $D$-modules. Now the input of the construction is a rank two
bundle with connection $\cE$ on $X$, and its output is a $D$-module $\Aut_\cE$ on $\Bun_2$.

The second transformation is a `classical limit' (`classical' here refers to the relation between classical 
and quantum mechanics). This involves replacing $D$-modules on a smooth variety (or stack) $Z$ with $O$-modules on the
cotangent bundle $T^*Z$. Now the input $\cE$ of the construction is a rank two Higgs bundle on $X$; equivalently,
$\cE$ is an $O$-module on $T^*X$ whose direct image to $X$ is locally free of rank two. The output is an $O$-module 
$\Aut_\cE$ on $T^*\Bun_2$.

In particular, let $\iota:C\hookrightarrow T^*X$ be an irreducible reduced \emph{spectral curve} for $\GL(2)$: that is, the map 
$C\to X$ is finite of degree two. Note that $C$ has at most double singular points.
We can then take $\cE=\iota_*F$ for a torsion-free sheaf $F$ on $C$ of generic rank one. 
By interpreting Drinfeld's construction in these settings, we obtain a formula for a sheaf $\Aut_\cE$ on $T^*\Bun_2$ 
(more precisely, on the cotangent space to a smooth cover of $\Bun_2$). 
From the point of view of geometric Langlands program, it is natural
to expect that the sheaf is supported on the compactified Jacobian $\oJ$ of $C$, which is embedded in $T^*\Bun_2$
as a fiber of the Hitchin fibration. Thus, given $F\in\oJ$, we have a conjectural construction of a sheaf on $\oJ$. 
This is the construction of $\oP_F$ provided by  Lemma~\ref{lm:QviaFlag'}, with surface $S$ being $T^*X$. 

From this point of view, the general formula \eqref{eq:Q} is obtained by extending Lemma~\ref{lm:QviaFlag'} from
$\Hilb'_S$ to $\Hilb_S$. Presumably, it is the classical limit of the formula for automorphic sheaves for $\GL(n)$
suggested by G.~Laumon (\cite{Lau1,Lau2}) and proved by E.~Frenkel, D.~Gaitsgory, and K.~Vilonen (\cite{FGV,Vanishing}).

Although \eqref{eq:Q} is inspired by results in the area of geometric Langlands conjecture, it is not clear whether the 
proofs from \cite{Dr,FGV,Vanishing} can be adapted to our settings. The argument of
this paper is based on different ideas.

\section{Proof of Theorem~\ref{th:oP}}\label{sc:proof}
It remains to prove Proposition~\ref{pp:FCM}. 
\begin{proof}[Proof of Proposition~\ref{pp:FCM}] 
The morphism $\sigma:\IS_S\to S^n$ has finite Tor-dimension, because $S^n$ is smooth. The subvariety
$C^n\subset S^n$ is locally a complete intersection of codimension $n$. By Corollary~\ref{co:HilbS0},
its preimage $\sigma^{-1}(C^n)\subset\IS_S$ also has codimension $n$. In other words, $C^n\subset S^n$
is locally cut out by a length $n$ regular sequence of functions on $S^n$, and the pull-back of this regular
sequence to $\IS_S$ remains regular. This implies that the restriction $\sigma:\sigma^{-1}(C^n)\to C^n$
has finite Tor-dimension. Now \eqref{pp:FCM1} follows from Lemma~\ref{lm:CMAcyclic}\eqref{lm:CMAcyclic1}.

Recall that $\IS_S$ is Cohen-Macaulay (it is finite and flat over $\Hilb_S$, which is smooth). 
As we saw, $\sigma^{-1}(C^n)\subset\Flag'_S$ is locally a complete intersection. Therefore, $\sigma^{-1}(C^n)$ is also Cohen-Macaulay. Lemma~\ref{lm:CMAcyclic}\eqref{lm:CMAcyclic2} implies \eqref{pp:FCM2}.

Let us prove \eqref{pp:FCM3}. Since $\psi$ is finite and flat, it follows that $\psi_*(\sigma^*(\iota_* F)^{\boxtimes n})$ is a 
Cohen-Macaulay sheaf of codimension $n$. The same is true for its direct summand 
\[M:=\psi_*(\sigma^*(\iota_* F)^{\boxtimes n})^{sign}.\]
Clearly, $\supp(M)\subset \psi(\sigma^{-1}(C^n))$, which is a reducible scheme of dimension $n$. One of its irreducible
components is $\Hilb_C$, and we need to show that $M$ is supported by this component.

Let us first verify this on the level of sets. 
Set \[Z:=\Hilb'_S\cap \psi(\sigma^{-1}((C^{sm})^n)).\] 
Corollary~\ref{co:HilbS0} implies that $\dim(\psi(\sigma^{-1}(C^n))-Z)<n$. Since $M$ is Cohen-Macaulay,
it suffices to check that 
\[\supp(M|_Z)\subset\Hilb_C\cap Z.\] 
But this follows from Proposition~\ref{pp:QandQ'}. Indeed, for $D\in Z$, we have
\[H^0(D,\iota_* F)\simeq H^0(D,\iota_* O_C),\]
because $F$ and $O_C$ are isomorphic in a neighborhood of $D$. Therefore, if $D\not\subset C$, we have
$\dim H^0(D,\iota_* F)<n$, and $M_D=0$ by Proposition~\ref{pp:QandQ'}.

Thus, $M$ is supported by $\Hilb_C$ in the set-theoretic sense. Note that $\psi(\sigma^{-1}(C^n))$ is reduced at the generic point of $\Hilb_C$ (in fact, it contains $\Hilb''_C$
as an open set). Since $M$ is Cohen-Macaulay, we see that its support is reduced, as required.
\end{proof}

\begin{remarks*}
\begin{myenum}
\item Suppose that $S$ admits a symplectic form 
(in fact, Proposition~\ref{pp:FCM} is local on $S$, so we can make this 
assumption without losing generality). Fix a symplectic form on $S$; it is well known that it
induces a symplectic form on $\Hilb_S$. 
One can check that the image $\psi(\sigma^{-1}(C^n))\subset\Hilb_S$ is Lagrangian. (Here one can assume that $C$ is smooth, in which
case the observation is due to I.~Grojnowski \cite[Proposition~3]{Groj}.)
This provides
a conceptual explanation why its dimension equals $n$. 

\item
The irreducible components of $\psi(\sigma^{-1}(C^n))$ have the following description (also contained in \cite{Groj}). 
Consider the 
`diagonal stratification' of $C^n$. The strata are indexed by the set $\Sigma$ of all equivalence relations 
$\sim$ on the set $\{1,\dots,n\}$ (in other words, $\Sigma$ is the set of partitions 
of the set $\{1,\dots,n\}$ into disjoint subsets). Given $\sim\in\Sigma$, 
the corresponding stratum is
\[
C^n_\sim:=\{(x_1,\dots,x_n)\in C^n\st x_i=x_j \text{ if and only if }i\sim j\}.\]
In particular, the open stratum $C^n_=$ corresponds to the usual equality relation $=\in\Sigma$.

For every $\sim\in\Sigma$, the preimage $\sigma^{-1}(C^n_\sim)$ is 
irreducible of dimension $n$ (see Corollary~\ref{co:HilbS0}). 
The irreducible components of $\psi(\sigma^{-1}(C^n))$ are of the form
$\overline{\psi(\sigma^{-1}(C^n_\sim))}$; they are indexed by $\Sigma/S_n$ (which is the set of partitions of $n$). 

\item Let us keep the notation of the previous remark. 
It is not hard to check that at the generic
point of the component corresponding to $\sim\in\Sigma$, the fiber of $\psi_*\sigma^*(\iota_*(F)^{\boxtimes n})$ 
is isomorphic to the space of functions on $S_n/S_\sim$ as a $S_n$-module. Here
\[S_\sim:=\{\tau\in S_n\st \tau(i)\sim i\text{ for all }i\}\subset S_n\]
is the subgroup given by the partition $\sim$. In particular, if
$\sim$ is not discrete, the generic fiber has no anti-invariants
under the action of $S_n$. This provides another explanation of Proposition~\ref{pp:FCM}\eqref{pp:FCM3}. 
\end{myenum}
\end{remarks*}

\section{Properties of the Poincar\'e sheaf}\label{sc:properties}
Consider the Poincar\'e sheaf $\oP$ on $\oJ\times\oJ$ provided by Theorem~\ref{th:oP}. 

\subsection{$\oP$ as an extension}
\begin{lemma}\label{lm:oPCM} Let  $j:J\times\oJ\cup\oJ\times J\hookrightarrow\oJ\times\oJ$ be the open embedding.
\begin{enumerate} 
\item\label{lm:oPCM1} $\oP$ is a maximal Cohen-Macaulay sheaf on $\oJ\times\oJ$;
\item\label{lm:oPCM2} $\oP=j_*P$;
\item\label{lm:oPCM3} 
$\oP$ is equivariant with respect to the permutation of the factors $p_{21}:\oJ\times\oJ\to\oJ\times\oJ$.
\end{enumerate}
\end{lemma}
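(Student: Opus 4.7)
The plan is to extract all three parts of the lemma as formal consequences of Theorem~\ref{th:oP} combined with the general machinery set up in Section~\ref{sc:CM}. The key observation is that $\oJ$ is an irreducible locally complete intersection of dimension $g$ (hence Cohen-Macaulay and Gorenstein), so $\oJ\times\oJ$ is Gorenstein of pure dimension $2g$, and the boundary $\oJ\setminus J$ is a proper closed subset of an irreducible scheme, hence of codimension at least one.

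For \eqref{lm:oPCM1}, I would apply Lemma~\ref{lm:CMFamily}\eqref{lm:CMFamily1} to $X=Y=\oJ$ and $M=\oP$. By Theorem~\ref{th:oP}\eqref{th:oP3}, $\oP$ is flat over $Y=\oJ$ and each fiber $\oP|_{\oJ\times\{F\}}$ is Cohen-Macaulay. To check these fibers have codimension zero (i.e.\ are maximal Cohen-Macaulay), restrict further: $\oP_F|_J$ is a line bundle by Theorem~\ref{th:oP}\eqref{th:oP1}, so $\oP_F$ has full support on the irreducible scheme $\oJ$, hence codimension zero. Lemma~\ref{lm:CMFamily}\eqref{lm:CMFamily1} then gives that $\oP$ is Cohen-Macaulay of codimension zero on $\oJ\times\oJ$, i.e.\ maximal Cohen-Macaulay.

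For \eqref{lm:oPCM2}, note that the complement $Z=(\oJ\times\oJ)\setminus (J\times\oJ\cup\oJ\times J)=(\oJ\setminus J)\times(\oJ\setminus J)$ has codimension at least two in $\oJ\times\oJ$, since $\oJ\setminus J$ has codimension at least one in the irreducible scheme $\oJ$. By \eqref{lm:oPCM1}, $\oP$ is maximal Cohen-Macaulay, so Lemma~\ref{lm:CMExt} gives $\oP=j_*(\oP|_{J\times\oJ\cup\oJ\times J})=j_*P$, where the last equality is Theorem~\ref{th:oP}\eqref{th:oP1}.

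For \eqref{lm:oPCM3}, apply $p_{21}^*$ (which is an isomorphism of schemes preserving the open set $J\times\oJ\cup\oJ\times J$ as a whole, swapping its two pieces) to the formula in \eqref{lm:oPCM2}:
\[
p_{21}^*\oP=p_{21}^*j_*P=j_*(p_{21}^*P).
\]
The symmetry of $P$ on $J\times\oJ\cup\oJ\times J$ noted in Section~1.1 (coming from the manifest symmetry of formula~\eqref{eq:Poincare} in $F_1,F_2$ whenever one of them is a line bundle) gives $p_{21}^*P\simeq P$, so $p_{21}^*\oP\simeq j_*P=\oP$. The only mild subtlety, which I would verify carefully, is that the symmetry isomorphism $p_{21}^*P\simeq P$ on the non-separated-looking union $J\times\oJ\cup\oJ\times J$ is canonical and compatible on the overlap $J\times J$, so that pushing forward under $j_*$ is unambiguous; this is immediate from formula~\eqref{eq:Poincare}, where the four determinant factors are manifestly symmetric in $(F_1,F_2)$. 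I do not expect any real obstacle: the entire proof is a routine application of the Cohen-Macaulay extension machinery to the output of Theorem~\ref{th:oP}.
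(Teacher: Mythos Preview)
Your proposal is correct and follows exactly the approach of the paper's proof, which simply cites Lemma~\ref{lm:CMFamily} for \eqref{lm:oPCM1}, Lemma~\ref{lm:CMExt} for \eqref{lm:oPCM2}, and the $p_{21}$-equivariance of $P$ for \eqref{lm:oPCM3}. You have filled in the details (checking the fibers are \emph{maximal} Cohen-Macaulay, computing the codimension of the complement, and verifying the symmetry of $P$ on the overlap) that the paper leaves implicit, but the logical structure is identical.
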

\begin{proof} \eqref{lm:oPCM1} follows from Lemma~\ref{lm:CMFamily}. Now Lemma~\ref{lm:CMExt} implies
\eqref{lm:oPCM2}. Finally, \eqref{lm:oPCM3} is also clear, because $P$ is equivariant under $p_{21}$.
\end{proof}
In particular, Theorem~\ref{th:oP}\eqref{th:oP3} and Lemma~\ref{lm:oPCM}\eqref{lm:oPCM3} imply that $\oP$ is flat with 
respect to both projections $\oJ\times\oJ\to\oJ$, and that for every $F\in\oJ$, the restrictions $\oP|_{\{F\}\times\oJ}$
and $\oP|_{\oJ\times\{F\}}$ give the same sheaf on $\oJ$, which we denoted by $\oP_F$.

\subsection{$\oP$ and duality}
Consider now the duality involution
\[\nu:\oJ\to\oJ:F\to F^\vee:=\HOM(F,O_C).\]
Note that $\nu$ is an algebraic map by Lemma~\ref{lm:CMFamily}.
\begin{lemma}\label{lm:oPvee} 
\begin{enumerate}
\item $(\nu\times\id_\oJ)^*\oP=(\id_\oJ\times\nu)^*\oP=\oP^\vee$;
\item $(\nu\times\nu)^*\oP=\oP$.
\end{enumerate}
\end{lemma}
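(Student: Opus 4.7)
The plan is to exploit the uniqueness of maximal Cohen--Macaulay extensions (Lemma~\ref{lm:CMExt}) to reduce both identities to easy verifications on the open subset $U:=J\times\oJ\cup\oJ\times J$, where $\oP$ restricts to the line bundle $P$ of formula \eqref{eq:Poincare}; note that the complement $\oJ\times\oJ\setminus U=(\oJ\setminus J)\times(\oJ\setminus J)$ has codimension at least two. Before doing so, I need $\nu$ to be a genuine involution of $\oJ$: because $C$ has planar singularities it is Gorenstein, so every rank-one torsion-free sheaf $F$ on $C$ is maximal Cohen--Macaulay, whence $F^{\vee\vee}\simeq F$; a short computation using Serre duality shows $\chi(F^\vee)=\chi(F)$, so $\nu$ sends $\oJ$ to $\oJ$ and $\nu^2=\id$.

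For part (1), both $(\nu\times\id_\oJ)^*\oP$ and $\oP^\vee$ are maximal Cohen--Macaulay sheaves on the Gorenstein scheme $\oJ\times\oJ$: pullback under the automorphism $\nu\times\id_\oJ$ preserves the MCM property, while on a Gorenstein base the functor $\HOM(-,O_{\oJ\times\oJ})$ sends MCM sheaves to MCM sheaves (the higher $\EXT$'s vanish by Gorenstein duality). By Lemma~\ref{lm:CMExt} it then suffices to produce an isomorphism on $U$. On $J\times\oJ$ I use the explicit description $P_{(L,F)}=\bigotimes F_{x_i}^{\otimes a_i}$ for $L\simeq O(\sum a_ix_i)$ supported at smooth points, from which $P_{(L^{-1},F)}=P_{(L,F)}^{-1}=P_{(L,F)}^\vee$ since $P_{(L,F)}$ is a line; the case $\oJ\times J$ follows by symmetry from Lemma~\ref{lm:oPCM}\eqref{lm:oPCM3}. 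The same argument with the factors exchanged gives $(\id_\oJ\times\nu)^*\oP\simeq\oP^\vee$.

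For part (2) I compose the two isomorphisms of (1):
\[(\nu\times\nu)^*\oP=(\nu\times\id_\oJ)^*(\id_\oJ\times\nu)^*\oP\simeq(\nu\times\id_\oJ)^*\oP^\vee\simeq\bigl((\nu\times\id_\oJ)^*\oP\bigr)^\vee\simeq(\oP^\vee)^\vee\simeq\oP,\]
where the third isomorphism uses that $\HOM(-,O)$ commutes with pullback under an automorphism, and the last is the reflexivity of maximal Cohen--Macaulay sheaves on the Gorenstein scheme $\oJ\times\oJ$ (which, if one prefers, is itself a further application of Lemma~\ref{lm:CMExt}: $\oP$ and $\oP^{\vee\vee}$ are both MCM and agree on $U$, where both restrict to the line bundle $P$). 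The only genuine content is the small computation with the formula for $P$ on $U$; the rest is bookkeeping with the Cohen--Macaulay extension machinery, so I do not expect any serious obstacle.
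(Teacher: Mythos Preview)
Your proof is correct and follows essentially the same approach as the paper: both argue that all sheaves involved are maximal Cohen--Macaulay on the Gorenstein scheme $\oJ\times\oJ$, then reduce to checking the identities on the open set $U=J\times\oJ\cup\oJ\times J$ (whose complement has codimension $\ge 2$) via Lemma~\ref{lm:CMExt}, where they follow directly from the explicit formula~\eqref{eq:Poincare} for $P$. The only cosmetic difference is that the paper treats parts (1) and (2) uniformly in one sentence, whereas you derive (2) from (1) by composition and reflexivity; both are fine.
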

\begin{proof} By Lemma~\ref{lm:oPCM}, all of the sheaves in the statement are maximal Cohen-Macaulay. It remains to notice
that over $(J\times\oJ\cup\oJ\times J)\subset\oJ\times\oJ$ the required identifications are clear from the definition of $\oP$.
\end{proof}

\begin{corollary} Let ${\mathfrak F}:D^b(\oJ)\to D^b(\oJ)$ be the Fourier-Mukai functor of Theorem~\ref{th:FM}. Its restriction
to $D^b_{coh}(\oJ)$ satisfies
\[{\mathfrak F}\circ\D\simeq(\nu^*\circ\D\circ{\mathfrak F})[-g].\]
\end{corollary}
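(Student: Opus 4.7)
The plan is to combine Grothendieck--Serre duality for the proper flat morphism $p_1\colon\oJ\times\oJ\to\oJ$ with the Cohen--Macaulay and symmetry properties of $\oP$ recorded in Lemmas~\ref{lm:oPCM} and~\ref{lm:oPvee}. Since both $\oJ$ and $\oJ\times\oJ$ are Gorenstein and $p_1$ is flat of relative dimension $g$, we have $p_1^!\DC_\oJ=\DC_{\oJ\times\oJ}[g]$, so Grothendieck duality yields the functor identity $\D\circ Rp_{1,*}\simeq Rp_{1,*}\circ\D\,[g]$. Applied to $\mathcal{H}=p_2^*\mathcal{G}\otimes\oP$---a plain tensor product, since $\oP$ is flat over $p_2$ by Theorem~\ref{th:oP}(\ref{th:oP3})---this gives
\[\D\mathfrak{F}(\mathcal{G})\simeq Rp_{1,*}\D(p_2^*\mathcal{G}\otimes\oP)[g].\]

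Next I would compute the inner dual. Using $\D(A\otimes B)\simeq R\HOM(A,\D B)$ together with the fact that $\oP$ is maximal Cohen--Macaulay on the Gorenstein scheme $\oJ\times\oJ$ (Lemma~\ref{lm:oPCM}(\ref{lm:oPCM1})), the complex $\D\oP$ is concentrated in degree zero and equals $\oP^\vee\otimes\omega_{\oJ\times\oJ}$; by Lemma~\ref{lm:oPvee}(1), $\oP^\vee\simeq(\id\times\nu)^*\oP$. Substituting, $\D(p_2^*\mathcal{G}\otimes\oP)\simeq R\HOM(p_2^*\mathcal{G},(\id\times\nu)^*\oP)\otimes\omega_{\oJ\times\oJ}$.

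Finally I would apply $\nu^*$ on the target $\oJ$ and use flat base change along the commutative square $p_1\circ(\nu\times\id)=\nu\circ p_1$, which replaces $\nu^*\circ Rp_{1,*}$ by $Rp_{1,*}\circ(\nu\times\id)^*$. Under this pullback, $p_2^*\mathcal{G}$ and $\omega_{\oJ\times\oJ}$ are preserved, while $(\nu\times\id)^*(\id\times\nu)^*\oP=(\nu\times\nu)^*\oP=\oP$ by Lemma~\ref{lm:oPvee}(2). Combined with the Serre-duality identity $\D p_2^*\mathcal{G}\simeq p_2^*\D\mathcal{G}\otimes p_1^*\omega_\oJ$ (which follows from $p_2^!(-)\simeq p_2^*(-)\otimes p_1^*\omega_\oJ[g]$) and the projection formula for $Rp_{1,*}$ applied to the line bundle $p_1^*\omega_\oJ$, the various line-bundle twists cancel and one recovers $Rp_{1,*}(p_2^*\D\mathcal{G}\otimes\oP)=\mathfrak{F}(\D\mathcal{G})$, with the overall shift $[-g]$ coming from the Grothendieck-duality shift $[g]$.

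The main technical obstacle is keeping careful track of the line-bundle twists---$\omega_{\oJ\times\oJ}$ from dualizing the kernel $\oP$, $p_1^*\omega_\oJ$ from dualizing the pullback $p_2^*\mathcal{G}$, and the relative dualizing sheaves for $p_1$ and $p_2$---and verifying that they combine so that only the cohomological shift $[-g]$ remains. Each individual step is a routine application of Grothendieck duality, the projection formula, or one of the structural lemmas for $\oP$; the subtlety lies entirely in their combination.
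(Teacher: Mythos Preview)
Your approach is essentially the same as the paper's. The paper's proof is two sentences: it asserts that Serre duality gives
\[\D\circ\mathfrak{F}\circ\D:\ \cG\mapsto Rp_{1,*}(p_2^*\cG\otimes\oP^\vee)\otimes\omega_\oJ[-g],\]
and then invokes Lemma~\ref{lm:oPvee} (that $\oP^\vee=(\nu\times\id)^*\oP$) together with base change along $p_1\circ(\nu\times\id)=\nu\circ p_1$. You unpack the same Grothendieck-duality computation more explicitly, starting from $\D\mathfrak{F}$ rather than $\D\mathfrak{F}\D$, and use both parts of Lemma~\ref{lm:oPvee} via the factorization $(\nu\times\id)^*(\id\times\nu)^*\oP=(\nu\times\nu)^*\oP=\oP$; this is a cosmetic difference.

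One caveat applies equally to your sketch and to the paper's: your assertion that ``the various line-bundle twists cancel'' is not literally true from the manipulations you describe. Tracing the twists as you indicate, one finds $\nu^*\D\mathfrak{F}(\cG)\simeq\mathfrak{F}(\D\cG)\otimes\omega_\oJ[g]$, so a factor of $\omega_\oJ$ survives; the paper's displayed formula likewise carries an $\omega_\oJ$ which is then silently dropped. The statement of the corollary is correct because $\omega_\oJ\simeq O_\oJ$ for the compactified Jacobian of a curve with planar singularities, but neither proof makes this explicit. If you want the argument to be self-contained, you should state and justify that triviality (e.g.\ via deformation to the smooth case, or by identifying $\omega_\oJ$ as the pullback of a one-dimensional vector space from the base).
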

\begin{proof}
By Serre's duality, the functor $\D\circ{\mathfrak F}\circ\D$ is given by
\[D^b_{coh}(\oJ)\to D^b_{coh}(\oJ):\cG\mapsto Rp_{1,*}(p_2^*(\cG)\otimes\oP^\vee)\otimes\omega_\oJ[-g].\]
Now Lemma~\ref{lm:oPvee} implies the statement.
\end{proof}

\subsection{Theorem of the Square}
Consider the universal Abel-Jacobi map 
\[A:J\times C\to\oJ:(L,c)\mapsto L(c-p_0):=\HOM(\cI_{c},L(-p_0)).\]
(Recall that $p_0\in C$ is a fixed smooth point.) 
It is easy to see that $\oP$ agrees with it in the following sense:
\begin{lemma} \label{lm:AJ}
Consider the diagram
\[
\xymatrix{J\times\oJ& J\times C\times\oJ\ar[r]^-{p_{23}}\ar[l]_-{p_{13}}\ar[d]^-{A\times\id_\oJ}& C\times\oJ\\
&\oJ\times\oJ.}
\]
We have $(A\times\id_\oJ)^*\oP=p_{23}^*(\cF)\otimes p_{13}^*P$. Recall that $\cF$ is the universal sheaf on $C\times\oJ$.
\end{lemma}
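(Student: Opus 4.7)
The plan is to verify the asserted identity on an open subset $W \subset J \times C \times \oJ$ whose complement has codimension at least two, and then to invoke Lemma~\ref{lm:CMExt}. Set
\[W := (J \times C^{sm} \times \oJ) \cup (J \times C \times J);\]
its complement $J \times C^{sing} \times (\oJ \setminus J)$ has codimension at least two, since $C^{sing}$ has codimension one in $C$ and $\oJ \setminus J$ has codimension one in $\oJ$.

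First I would verify the identity on $W$ using the determinant formula~\eqref{eq:Poincare}. On $J \times C^{sm} \times \oJ$, the map $A \times \id_\oJ$ factors through $J \times \oJ$, where $\oP = P$ by Theorem~\ref{th:oP}\eqref{th:oP1}. For $c \in C^{sm}$, the skyscraper $\kkk_c$ admits a length-one locally free resolution in a neighborhood of $c$, so for any torsion-free $F \in \oJ$ the sequence
\[0 \to L(-p_0) \otimes F \to L(c-p_0) \otimes F \to F_c \to 0\]
is exact. Combined with the analogous sequence at the framing point $p_0$ (which contributes trivially by the normalization $\cF|_{\{p_0\}\times\oJ} = O_\oJ$), the formula~\eqref{eq:Poincare} gives $(A \times \id_\oJ)^* P = p_{23}^* \cF \otimes p_{13}^* P$ on $J \times C^{sm} \times \oJ$. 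On $J \times C \times J$ the map factors through $\oJ \times J$, where again $\oP = P$; here the sequence $0 \to L(-p_0) \to L(c-p_0) \to \kkk_c \to 0$ is valid for every $c \in C$, since $C$ is Gorenstein and hence $\EXT^1(\kkk_c, O_C) \simeq \kkk$. Tensoring with a line bundle $M \in J$ preserves exactness, and the same determinant computation yields the identity, matching the previous one on the overlap.

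Next I would show that both sides are maximal Cohen-Macaulay on $J \times C \times \oJ$. For the right-hand side, the universal sheaf $\cF$ is maximal Cohen-Macaulay on $C \times \oJ$ by Lemma~\ref{lm:CMFamily}\eqref{lm:CMFamily1}; its pullback by the flat projection $p_{23}$ remains so by Lemma~\ref{lm:CMAcyclic}\eqref{lm:CMAcyclic2}, and tensoring with the line bundle $p_{13}^* P$ preserves the property. For the left-hand side, one factors $A = m \circ (\id_J \times t)$, where $m : J \times \oJ \to \oJ$, $(L, F) \mapsto L \otimes F$, is the $J$-action (smooth, since it coincides with the second projection after the automorphism $(L, F) \mapsto (L, L \otimes F)$ of $J \times \oJ$), and $t : C \to \oJ$, $c \mapsto \HOM(\cI_c, O_C(-p_0))$, is the degree-one Abel-Jacobi map. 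Pullback by $m \times \id_\oJ$ preserves the maximal Cohen-Macaulay property by Lemma~\ref{lm:CMAcyclic}\eqref{lm:CMAcyclic2}, reducing the question to finite Tor-dimension of $t$. Establishing this is the main obstacle; I would argue that $t(C) \subset \oJ$ is a regularly embedded subscheme of the expected codimension, using the planar-singularity hypothesis on $C$.

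Finally, Lemma~\ref{lm:CMExt} identifies the two maximal Cohen-Macaulay sheaves globally from their agreement on $W$, completing the proof.
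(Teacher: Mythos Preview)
Your overall approach matches the paper's: verify the identity on the open set $W=J\times(C^{sm}\times\oJ\cup C\times J)$, show both sides are maximal Cohen--Macaulay, and invoke Lemma~\ref{lm:CMExt}. The choice of $W$, the codimension estimate, the verification on $W$ via~\eqref{eq:Poincare}, and the Cohen--Macaulay argument for the right-hand side are all fine.

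Where you take an unnecessary detour is in handling the left-hand side. You reduce to showing that the Abel--Jacobi map $t:C\to\oJ$ has finite Tor-dimension, call this ``the main obstacle,'' and propose to argue that $t(C)\subset\oJ$ is regularly embedded---a claim you do not prove and which is not obvious at the singular points of $C$. This detour can be avoided entirely. By Lemma~\ref{lm:oPCM}\eqref{lm:oPCM3} and Theorem~\ref{th:oP}\eqref{th:oP3}, the sheaf $\oP$ is flat over the \emph{first} factor of $\oJ\times\oJ$ as well. The map $A\times\id_\oJ$ fits into the Cartesian square
\[
\xymatrix{
J\times C\times\oJ\ar[r]^-{A\times\id_\oJ}\ar[d]_{p_{12}} & \oJ\times\oJ\ar[d]^{p_1}\\
J\times C\ar[r]^-{A} & \oJ,
}
\]
with $p_1$ flat. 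Since $\oP$ is $p_1$-flat, the base-changed sheaf $(A\times\id_\oJ)^*\oP$ is $p_{12}$-flat (and the higher $\Tor$'s vanish, so derived and ordinary pullback agree). Its fibre over any $(L,c)\in J\times C$ is $\oP_{A(L,c)}$, which is maximal Cohen--Macaulay on $\oJ$. Now Lemma~\ref{lm:CMFamily}\eqref{lm:CMFamily1} (with $X=\oJ$, $Y=J\times C$, which is Gorenstein) gives that $(A\times\id_\oJ)^*\oP$ is maximal Cohen--Macaulay, with no hypothesis on $A$ needed. This is presumably what the paper's one-line proof has in mind.
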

\begin{proof}
Both sides are maximal Cohen-Macaulay, and their restrictions to $J\times(C^{sm}\times\oJ\cup C\times J)$ are identified. Here $C^{sm}\subset C$ is the smooth locus of $C$.
\end{proof}

\begin{remark*}
Set $Y:=(\oJ\times C^{sm}\cap J\times C)\subset\oJ\times C$. The map $A:J\times C\to\oJ$ 
extends to a regular map $Y\to\oJ$. Lemma~\ref{lm:AJ} remains true for this extension: it provides an isomorphism
of sheaves on $Y\times\oJ$. 
\end{remark*}

Similarly, we check that $\oP$ satisfies the 
Theorem of the Square. 
Namely, consider the action 
\[\mu:J\times\oJ\to\oJ:(L,F)\to L\otimes F.\]
Clearly, $\mu$ is a smooth algebraic morphism. 

\begin{lemma} \label{lm:Square}
Consider the diagram
\[
\xymatrix{J\times\oJ& J\times\oJ\times\oJ\ar[r]^-{p_{23}}\ar[l]_-{p_{13}}\ar[d]^-{\mu\times\id_\oJ}&\oJ\times\oJ\\
&\oJ\times\oJ.}
\]
We have $(\mu\times\id_\oJ)^*\oP=p_{13}^*(P)\otimes p_{23}^*\oP$.
\end{lemma}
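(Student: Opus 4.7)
The plan is to mimic exactly the strategy used for Lemma~\ref{lm:AJ}: show that both sides are maximal Cohen-Macaulay sheaves on $J\times\oJ\times\oJ$, then identify them on an open subscheme whose complement has codimension at least two, and conclude via Lemma~\ref{lm:CMExt}.

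First I would check that both sides are maximal Cohen-Macaulay. The scheme $J\times\oJ\times\oJ$ is locally a complete intersection (and hence Cohen-Macaulay), because $J$ is smooth and $\oJ$ is l.c.i. Since $\mu$ is smooth, so is $\mu\times\id_\oJ:J\times\oJ\times\oJ\to\oJ\times\oJ$, and $\oJ\times\oJ$ is Gorenstein. By Lemma~\ref{lm:oPCM}\eqref{lm:oPCM1} the sheaf $\oP$ is maximal Cohen-Macaulay, so Lemma~\ref{lm:CMAcyclic}\eqref{lm:CMAcyclic2} gives that $(\mu\times\id_\oJ)^*\oP$ is maximal Cohen-Macaulay. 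The same argument applied to the smooth projection $p_{23}$ shows $p_{23}^*\oP$ is maximal Cohen-Macaulay, and tensoring with the line bundle $p_{13}^*P$ preserves this.

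Next, consider the open subscheme
\[
U:=J\times\bigl((J\times\oJ)\cup(\oJ\times J)\bigr)\subset J\times\oJ\times\oJ.
\]
Its complement equals $J\times(\oJ\setminus J)\times(\oJ\setminus J)$. Since $\oJ\setminus J$ is a proper closed subset of the irreducible variety $\oJ$, each factor contributes codimension at least one, so the complement has codimension at least two. On the open piece $J\times J\times\oJ\subset U$ both sides are line bundles: the map $\mu\times\id_\oJ$ lands in $J\times\oJ$ where $\oP=P$, and $p_{13}$, $p_{23}$ both land in $J\times\oJ$ as well. The required identity then reduces to the classical Theorem of the Square for $P$, namely $(\mu\times\id_\oJ)^*P\simeq p_{13}^*P\otimes p_{23}^*P$ on $J\times J\times\oJ$, which follows directly from the explicit formula~\eqref{eq:Poincare} and the biextension property of the Deligne pairing in its two arguments. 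On the other piece $J\times\oJ\times J\subset U$ the third factor is in $J$, so $\oP$ becomes $P$ after swapping (using Lemma~\ref{lm:oPCM}\eqref{lm:oPCM3}), and the identity reduces to the same biextension property. These two identifications are defined by the same formula on the overlap $J\times J\times J$, so they glue to a canonical isomorphism on $U$.

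Finally, the difference $(\mu\times\id_\oJ)^*\oP\otimes(p_{13}^*P\otimes p_{23}^*\oP)^{-1}$ (or, more precisely, the canonically-defined morphism between the two maximal Cohen-Macaulay sheaves produced by the construction on $U$) extends uniquely across the codimension-two complement by Lemma~\ref{lm:CMExt}, establishing the desired isomorphism on all of $J\times\oJ\times\oJ$. The main issue to watch for is the compatibility of the two partial identifications on the overlap $J\times J\times J$; this is the step that could in principle fail by a sign or a line-bundle twist, but because both partial identifications are induced from \eqref{eq:Poincare} on the same line-bundle locus, they automatically agree there, so the gluing is forced.
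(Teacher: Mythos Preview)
Your proof is correct and follows exactly the paper's approach: the paper's proof is the one-line assertion that both sides are maximal Cohen-Macaulay and that their restrictions to $J\times(J\times\oJ\cup\oJ\times J)$ are identified, with the conclusion implicit via Lemma~\ref{lm:CMExt}. You have simply unpacked these two claims in detail, invoking Lemma~\ref{lm:CMAcyclic} for the Cohen-Macaulay property and the explicit formula~\eqref{eq:Poincare} for the identification on the open locus.
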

\begin{proof}
Both sides are maximal Cohen-Macaulay, and their restrictions to $J\times(J\times\oJ\cup\oJ\times J)$ are identified.
\end{proof}

\begin{remark*} Lemmas~\ref{lm:AJ} and \ref{lm:Square} are contained, in a form, in \cite{compactified}. Namely, Lemma~\ref{lm:AJ} is contained in the proof of \cite[Theorem~2.6]{compactified}, while Lemma~\ref{lm:Square} is equivalent to \cite[Proposition~2.5]{compactified}. Both statements are formulated for curves with double singularities, but this assumption can be removed using \cite[Theorem~C]{Ar}.
\end{remark*}

\subsection{Hecke eigenproperty} Let us also state a less obvious property of $\oP$, which is motivated by the Langlands program.
Essentially, we claim that $\oP$ is an `eigenobject' with respect to natural `Hecke endofunctors'. The proof of the property will
be given elsewhere; it is not used in this paper.

Let $\Heck$ be the moduli space of collections $(F_1,F_2,f)$,
where $F_1,F_2\in\oJ$ and $f$ is a non-zero map $F_1\hookrightarrow F_2(p_0)$, defined up to scaling. Informally,
$F_2(p_0)$ is an \emph{elementary upper modification} of $F_1$ at the point $\supp(\coker(f))\in C$. The space $\Heck$ 
is equipped with maps $\phi_1,\phi_2:\Heck\to\oJ$ and $\gamma:\Heck\to C$ that send $(F_1,F_2,f)$ to $F_1\in\oJ$, $F_2\in\oJ$,
and $\supp(\coker(f))$ respectively.

The Hecke eigenproperty claims that for every $F\in\oJ$, we have \[R(\phi_1,\gamma)_*\phi_2^*(\oP_F)\simeq\oP_F\boxtimes F.\]
We also have a `unversal' Hecke property for the sheaf $\oP$; its precise statement is left to the reader. The
Hecke eigenproperty generalizes Lemma~\ref{lm:AJ}.

\section{Fourier-Mukai transform}\label{sc:FM}
Set \[\Psi:=Rp_{13*}(p_{12}^*\oP^\vee\otimes p_{23}^*\oP)\in D^b(\oJ\times\oJ).\] 
Our goal is to prove
\begin{proposition} \label{pp:FM}
$\Psi\simeq O_\Delta[-g]\otimes_\kkk\det H^1(C,O_C)$.
\end{proposition}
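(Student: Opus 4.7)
The approach is a deformation-to-the-smooth-case argument combined with the Cohen-Macaulay extension machinery of Section~\ref{sc:CM}. The guiding principle is that once $\Psi$ is shown to be cohomologically concentrated in a single degree with Cohen-Macaulay $H^g$, the identification $\Psi\simeq O_\Delta[-g]\otimes\det H^1(C,O_C)$ is forced by what happens on any dense open subset --- in particular, a stratum where the parametrized curve is smooth, where Mukai's classical autoduality applies. First I would apply Lemma~\ref{lm:Square} to both factors of $\oP$ (and symmetrically to $\oP^\vee$ via Lemma~\ref{lm:oPvee}) to show that the kernel $p_{12}^*\oP^\vee\otimes p_{23}^*\oP$ is equivariant for the diagonal action of $J$ on $\oJ\times\oJ\times\oJ$: the Poincar\'e-line-bundle twists contributed by $\oP^\vee$ cancel those from $\oP$. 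Consequently $\Psi$ is $J$-equivariant on $\oJ\times\oJ$ under the diagonal translation, its fiber at $(O_C,F)$ is $R\Gamma(\oJ,\oP_F)$, and the proposition reduces to the vanishing $R\Gamma(\oJ,\oP_F)=0$ for $F\ne O_C$ together with the computation $R\Gamma(\oJ,O_\oJ)=\det H^1(C,O_C)[-g]$.

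To establish this uniformly for every $F\in\oJ$, I would pass to a flat family $\pi:\mathcal{C}\to B$ of integral projective curves with planar singularities over a smooth irreducible base $B$, chosen so that $\mathcal{C}_{b_0}=C$ and the generic fiber is smooth (such smoothings exist because planar singularities are smoothable). This is precisely the use of the universal family of curves alluded to in the remark after Theorem~\ref{th:FM}. The constructions of Sections~\ref{sc:Q}--\ref{sc:proof} and the properties of Section~\ref{sc:properties} are geometric and extend verbatim to the relative setting, yielding a relative Poincar\'e sheaf on $\overline{\mathcal{J}}\times_B\overline{\mathcal{J}}$ whose formation commutes with base change. Over the open locus of smooth fibers, $\Psi_b$ coincides with the classical Fourier--Mukai convolution on the abelian variety $J_b$, which by Mukai's theorem equals $O_{\Delta_b}[-g]\otimes\det H^1(\mathcal{C}_b,O_{\mathcal{C}_b})$.

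Next, I would combine flatness and the maximal Cohen-Macaulay property of the relative $\oP$ with Lemmas~\ref{lm:CMFamily} and~\ref{lm:CMAcyclic} to show that the relative $\Psi$ is cohomologically concentrated in degree $g$ on all of $B$, and that $H^g(\Psi)$ is a maximal Cohen-Macaulay sheaf supported on the relative diagonal $\Delta_B\subset\overline{\mathcal{J}}\times_B\overline{\mathcal{J}}$ (itself Cohen-Macaulay). The isomorphism $H^g(\Psi)\simeq O_{\Delta_B}\otimes\det R^1\pi_*O_{\mathcal{C}}$ established over the smooth-fiber locus then extends uniquely across complements of codimension $\ge 2$ by Lemma~\ref{lm:CMExt}, yielding the proposition on every fiber, including $b_0$.

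The technical heart of the argument is this last stage: showing that $\Psi$ is genuinely concentrated in degree $g$ on singular fibers and that its single nonzero cohomology sheaf is a maximal Cohen-Macaulay sheaf on the diagonal. A priori the higher direct images $R^ip_{13*}$ could jump on special fibers, and $\Psi_b$ could acquire extra cohomology supported off the diagonal. Ruling this out requires both the translation-equivariance of the first step (which enforces uniform behavior along $J$-orbits and prunes support away from $\Delta$) and tight control of the Cohen-Macaulay formalism throughout the family, relying crucially on the fact that $\oP$ is flat over \emph{both} factors and satisfies the duality of Lemma~\ref{lm:oPvee}. The relative viewpoint is indispensable precisely because it supplies a dense smooth-fiber locus where Mukai's classical theorem pins down the answer that the extension argument then propagates.
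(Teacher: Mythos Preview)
Your overall architecture matches the paper's: pass to a family with smooth generic fiber, invoke Mukai on the smooth locus, then propagate via Cohen--Macaulay extension (Lemma~\ref{lm:CMExt}). But the step you flag as ``the technical heart'' --- showing that $\Psi$ is concentrated in degree $g$ with Cohen--Macaulay $H^g$ --- is where your proposal has a genuine gap. You invoke Lemmas~\ref{lm:CMFamily} and~\ref{lm:CMAcyclic}, but those control Cohen--Macaulayness of sheaves and their pullbacks; they say nothing about the cohomological amplitude of a derived pushforward $Rp_{13,*}$. Nothing in your outline prevents $H^i(\Psi)$ from being nonzero for several $i$ over the singular locus.

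The paper supplies two ingredients you are missing. First, a sharp \emph{support bound}: $\codim\supp(\Psi_{univ})\ge g$ on the universal family. This is the content of Proposition~\ref{pp:1} through Corollary~\ref{co:bound} (the $J$-equivariance you mention is used here, but in the stronger form of Proposition~\ref{pp:1}, which extracts a splitting of a $\gm$-extension and hence an isomorphism $F_1|_{C^{sm}}\simeq F_2|_{C^{sm}}$), combined with the codimension estimate $\codim\cM^{(\tilde g)}\ge g-\tilde g$ for the geometric-genus strata. Second, a dedicated amplitude lemma (Lemma~\ref{lm:Serre}): if $\cG$ has $\codim\supp\cG\ge d$, $H^i(\cG)=0$ for $i>0$, and $H^i(\D\cG)=0$ for $i>d$, then $\cG$ is Cohen--Macaulay of codimension $d$. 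Applied to $\cG=\Psi_{univ}[g]$ with $d=g$, the first hypothesis is the support bound, the second is automatic (pushforward from relative dimension $g$), and the third is Serre duality ($\D\Psi\simeq p_{21}^*\Psi[g]$). Only after this does the Cohen--Macaulay extension argument go through.

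A secondary issue: your reduction ``the proposition reduces to $R\Gamma(\oJ,\oP_F)=0$ for $F\ne O_C$'' is incomplete. Diagonal $J$-equivariance only lets you move within $J$-orbits, so it transports information from the slice $\{O_C\}\times\oJ$ to $J\times\oJ$, not to $(\oJ\setminus J)\times\oJ$. The fiberwise vanishing you describe is essentially \cite[Theorem~A]{Ar}, which the paper does use --- but as input to Corollary~\ref{co:bound}, not as a direct computation of $\Psi$.
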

Proposition~\ref{pp:FM} implies Theorem~\ref{th:FM} by the argument completely analogous to the proof of 
\cite[Theorem~2.2]{Mukai}. The proof of Proposition~\ref{pp:FM} follows the same pattern as the proof of \cite[Theorem~A]{Ar}.

\subsection{Upper bound on the support of $\Psi$}
Denote by $\tilde g$ the geometric genus of $C$.

For any $F\in\oJ$, consider the restriction $P_F=(\oP_F)|_J$. It is a line bundle on $J$. 

\begin{proposition}[{cf. \cite[Proposition~1]{Ar}}] \label{pp:1}
If $(F_1,F_2)\in\supp(\Psi)$, then $P_{F_1}\simeq P_{F_2}$.
\end{proposition}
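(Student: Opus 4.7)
My plan follows the strategy of \cite[Proposition~1]{Ar}, combining the Theorem of the Square (Lemma~\ref{lm:Square}) with a cohomological vanishing for $\oP_F$.

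By Lemma~\ref{lm:Square} and the swap symmetry of $\oP$ (Lemma~\ref{lm:oPCM}\eqref{lm:oPCM3}), for $L\in J$ and $F\in\oJ$ we have $\oP_{LF}\simeq P_L\otimes\oP_F$ as sheaves on $\oJ$, so
\[
\oP_{LF_1}^\vee\otimes\oP_{LF_2}\simeq P_L^{-1}\otimes\oP_{F_1}^\vee\otimes P_L\otimes\oP_{F_2}\simeq\oP_{F_1}^\vee\otimes\oP_{F_2},
\]
whence $\Psi_{(LF_1,LF_2)}\simeq\Psi_{(F_1,F_2)}$; in particular, $\supp(\Psi)$ is invariant under the diagonal $J$-action on $\oJ\times\oJ$. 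Restricting to $J\times\oJ$ and translating by $L^{-1}$ gives $\Psi_{(L,F_2)}\simeq R\Gamma(\oJ,\oP_{L^{-1}F_2})$. The containment on $J\times\oJ$ is therefore equivalent to the vanishing statement
\[
R\Gamma(\oJ,\oP_F)\neq 0\ \Longrightarrow\ P_F\simeq O_J\qquad(F\in\oJ).
\]

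For $F\in J$, $\oP_F=P_F$ is a line bundle on $\oJ$ and the statement follows from full faithfulness of $\mathfrak F_J:D^b(J)\to D^b(\oJ)$ proved in \cite{Ar}: indeed $O_\oJ\simeq\mathfrak F_J(k_{O_C})$, hence $R\Gamma(\oJ,P_F)\simeq R\Hom_J(k_{O_C},k_F)$, which vanishes unless $F\simeq O_C$. For $F\in\oJ\setminus J$, I would pull $\oP_F$ back along the Abel-Jacobi map $\alpha:\Hilb_C\to\oJ$ (for $n\gg 0$) to the sheaf $Q_F$ of Section~\ref{sc:Q}. Via the diagram $\Hilb_S\leftarrow\IS_S\xrightarrow{\sigma}S^n$ and after taking $S_n$-antiinvariants, the calculation of $R\Gamma(\Hilb_C,Q_F)$ reduces to cohomology on $C^n$ of $F^{\boxtimes n}$ twisted by line bundles arising from $\det\cA^{-1}$; nontriviality of $P_F$ then forces this cohomology to vanish.

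To extend the containment from $J\times\oJ$ to all of $\oJ\times\oJ$, the argument above, repeated with the roles of the two factors interchanged, gives the containment on $\oJ\times J$ as well. Since $(\oJ\setminus J)\times(\oJ\setminus J)$ has codimension at least two in $\oJ\times\oJ$, the diagonal $J$-invariance of $\supp(\Psi)$ combined with the closedness of the locus $\{(F_1,F_2):P_{F_1}\simeq P_{F_2}\}$ (as the preimage of the diagonal under the classifying morphism $\oJ\to\Pic^0(J)$ attached to the family $P$) extends the containment across this codimension-two complement. The main obstacle is the vanishing $R\Gamma(\oJ,\oP_F)=0$ for $F\in\oJ\setminus J$ with $P_F\not\simeq O_J$: here $\oP_F$ is a genuine Cohen-Macaulay sheaf rather than a line bundle, so the Hilbert-scheme realization provides the only computational handle, and translating nontriviality of the line bundle $P_F$ on $J$ into an $S_n$-antiinvariant cohomology vanishing on $C^n$ is the principal technical step.
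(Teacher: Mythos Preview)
Your argument has a fatal gap in the extension step. Even granting the vanishing you flag as the ``principal technical step'' (which you do not prove, only sketch), the passage from $J\times\oJ\cup\oJ\times J$ to all of $\oJ\times\oJ$ fails. Containment of one closed set in another on a dense open does \emph{not} extend across the complement, whatever its codimension: a single point is already a counterexample. The diagonal $J$-invariance you invoke is of no help, since if $F_1,F_2\in\oJ\setminus J$ then $LF_1,LF_2\in\oJ\setminus J$ for every $L\in J$, so the $J$-orbit of $(F_1,F_2)$ never meets the open set on which you have established the containment. Lemma~\ref{lm:CMExt} concerns extension of \emph{sheaves}, not containment of supports, and is irrelevant here. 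For $(F_1,F_2)\in(\oJ\setminus J)^2$ one must confront $\HH^\bullet(\oJ,\oP_{F_1}\otimes^L\oP_{F_2}^\vee)$ directly, and this is not of the form $R\Gamma(\oJ,\oP_F)$ for any single $F\in\oJ$; your key vanishing is simply not the relevant statement.

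The paper's proof bypasses all of this with a uniform representation-theoretic argument. By Lemma~\ref{lm:Square}, the $J$-action on $\oJ$ lifts to an action of the $\gm$-torsor $T_i$ (associated to the line bundle $P_{F_i}$ on $J$, viewed as a central extension of $J$ by $\gm$) on the sheaf $\oP_{F_i}$. Hence the difference extension $T$, the torsor of $P_{F_1}\otimes P_{F_2}^{-1}$, acts on $\HH^\bullet(\oJ,\oP_{F_1}\otimes^L\oP_{F_2}^\vee)$ with $\gm\subset T$ acting through the tautological character. If this hypercohomology is nonzero, any one-dimensional subrepresentation of the abelian group $T$ furnishes a character $T\to\gm$ splitting the extension, whence $P_{F_1}\simeq P_{F_2}$. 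This handles all $(F_1,F_2)$ at once, with no case distinction and no auxiliary vanishing theorem.
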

\begin{proof} By base change, $(F_1,F_2)\in\supp(\Psi)$ if and only if $\HH^i(\oJ,\oP_{F_1}\otimes^L\oP_{F_2}^\vee)\ne 0$
for some $i$. (Note that the hypercohomology may be non-zero even if $i$ is negative.) Let $T_i\to J$ be the $\gm$-torsor
corresponding to $P_{F_i}$ for $i=1,2$. From definition \eqref{eq:Poincare}, we see that $T_i$ is naturally an abelian group which
is an extension of $J$ by $\gm$. Lemma~\ref{lm:Square} implies that the action of $J$ on $\oJ$ lifts to an action of $T_i$
on $P_{F_i}$. 

Now let $T$ be the $\gm$-torsor corresponding to $P_{F_1}\otimes P_{F_2}^\vee$. It is also an extension of $J$ by $\gm$ (the
difference of extensions $T_1$ and $T_2$). It follows that $T$ acts on $\HH^i(\oJ,\oP_{F_1}\otimes^L\oP_{F_2}^\vee)$ for all $i$,
with $\gm\subset T$ acting via the tautological character.

Finally, if $\HH^i(\oJ,\oP_{F_1}\otimes^L\oP_{F_2}^\vee)\ne 0$, it contains an irreducible $T$ submodule $V$. Since $T$ is
abelian, $\dim(V)=1$, and $T$ acts by a character $\chi:T\to\gm$. Then $\chi$ provides a splitting $T\simeq J\times\gm$.
This implies the statement.
\end{proof}

\begin{corollary}\label{co:2}
 If $(F_1,F_2)\in\supp(\Psi)$, then $F_1|_{C^{sm}}\simeq F_2|_{C^{sm}}$.
\end{corollary}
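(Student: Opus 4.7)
The plan is short: Proposition~\ref{pp:1} already delivers an isomorphism $P_{F_1}\simeq P_{F_2}$ of line bundles on $J$, so it will suffice to show that $F|_{C^{sm}}$ can be recovered from $P_F$ by pulling back along the Abel-Jacobi embedding of the smooth locus.

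Concretely, I would introduce the morphism $e:C^{sm}\to J$ sending $c\mapsto O(c-p_0)$; this is nothing but the restriction of the universal Abel-Jacobi map $A:J\times C\to\oJ$ of Lemma~\ref{lm:AJ} to $\{O_C\}\times C^{sm}$, and because $c$ is smooth the line $O(c-p_0)$ is genuinely invertible, so the image of $e$ lies in $J\subset\oJ$.

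Next I would apply Lemma~\ref{lm:AJ} and restrict the identity $(A\times\id_\oJ)^*\oP=p_{23}^*\cF\otimes p_{13}^*P$ to the subscheme $\{O_C\}\times C^{sm}\times\oJ$. The factor $p_{13}^*P$ there becomes the pullback of $P|_{\{O_C\}\times\oJ}$, which is canonically trivial by the explicit formula \eqref{eq:Poincare}; the remaining factor restricts to $\cF|_{C^{sm}\times\oJ}$. This yields a canonical isomorphism $(e\times\id_\oJ)^*P\simeq\cF|_{C^{sm}\times\oJ}$, and specializing at $F\in\oJ$ gives $e^*P_F\simeq F|_{C^{sm}}$.

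Finally, pulling the isomorphism $P_{F_1}\simeq P_{F_2}$ back along $e$ produces the desired $F_1|_{C^{sm}}\simeq F_2|_{C^{sm}}$. I do not anticipate any real obstacle: all the substance is already packaged into Proposition~\ref{pp:1} and Lemma~\ref{lm:AJ}, and the only ingredient added here is the elementary observation that pullback along $e$ converts $P_F$ into $F|_{C^{sm}}$.
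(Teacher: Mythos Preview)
Your argument is correct and follows essentially the same route as the paper: pull back the isomorphism $P_{F_1}\simeq P_{F_2}$ along the Abel--Jacobi embedding $C^{sm}\to J$, using that $e^*P_F\simeq F|_{C^{sm}}$ (which you justify via Lemma~\ref{lm:AJ}, while the paper simply cites the analogous result in \cite{Ar}).
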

\begin{proof} Same proof as \cite[Corollary~2]{Ar}: pull back the isomorphism of Proposition~\ref{pp:1} by the Abel-Jacobi map
$C\to\oJ$.
\end{proof}

\begin{proposition}[{cf. \cite[Corollary~3]{Ar}}]\label{pp:3} 
Consider the map $\mu\times\id_\oJ: J\times\oJ\times\oJ\to\oJ$. For any $F_1,F_2\in\oJ$,
the intersection
\[Z:=(\mu\times\id_\oJ)^{-1}(\supp(\Psi))\cap J\times\{F_1\}\times\{F_2\}\] 
is of dimension at most $g-\tilde g$.
\end{proposition}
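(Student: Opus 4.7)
The plan is to use Corollary~\ref{co:2} to pin down $L|_{C^{sm}}$ up to isomorphism for every $L\in Z$, and then to bound the fiber dimension of the restriction morphism $J\to\Pic(C^{sm})$ using the standard description of the Jacobian of a singular curve.

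First I would unfold the definition of $Z$: a point $L\in J$ lies in $Z$ if and only if $(L\otimes F_1,F_2)\in\supp(\Psi)$. Applying Corollary~\ref{co:2} to the pair $(L\otimes F_1,F_2)$ and using that $F_i|_{C^{sm}}$ is a line bundle (since $F_i$ is torsion-free of generic rank one, hence locally free over the smooth locus) yields
\[L|_{C^{sm}}\simeq F_2|_{C^{sm}}\otimes (F_1|_{C^{sm}})^{-1}.\]
Thus $L|_{C^{sm}}$ is determined up to isomorphism as a line bundle on $C^{sm}$, and $Z$ is set-theoretically contained in a single fiber of the restriction homomorphism
\[\rho:J\to\Pic(C^{sm}),\qquad L\mapsto L|_{C^{sm}}.\]

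Next I would estimate the fiber dimension of $\rho$. Let $\pi:\tilde C\to C$ be the normalization, so that $\tilde C$ is a smooth projective curve of genus $\tilde g$ and $\pi$ is an isomorphism over $C^{sm}$. Set $\tilde J:=\Pic^0(\tilde C)$. Then $\rho$ factors as the pullback $\pi^*:J\to\tilde J$ followed by the restriction $\tilde J\to\Pic(C^{sm})$ along the open embedding $C^{sm}\hookrightarrow\tilde C$. The standard exact sequence induced by $O_C^*\hookrightarrow\pi_*O_{\tilde C}^*$,
\[1\to\gm\to H^0(C,\pi_*O_{\tilde C}^*)\to H^0(C,\pi_*O_{\tilde C}^*/O_C^*)\to\Pic(C)\to\Pic(\tilde C)\to 0,\]
identifies $\ker(\pi^*:J\to\tilde J)$ with an affine algebraic group of dimension exactly $g-\tilde g$. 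The second map $\tilde J\to\Pic(C^{sm})$ has discrete kernel, generated by the classes of the finitely many points of $\tilde C\setminus C^{sm}$. Therefore every non-empty fiber of $\rho$ has dimension at most $g-\tilde g$, which forces $\dim Z\le g-\tilde g$.

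The argument is a direct adaptation of \cite[Corollary~3]{Ar}, and I do not anticipate a substantive obstacle: all the genuinely new input has already been absorbed into Corollary~\ref{co:2}, and the remainder is the classical dimension count for the kernel of $\pi^*:J\to\tilde J$. The only point that requires mild care is that $F_i|_{C^{sm}}$ should be interpreted as an honest line bundle before taking tensor products and inverses, which is automatic from the definition of $\oJ$.
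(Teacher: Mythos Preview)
Your argument is correct and follows the same route as the paper: apply Corollary~\ref{co:2} to $(L\otimes F_1,F_2)$ to fix $L|_{C^{sm}}$, then bound the fiber of $J\to\Pic(C^{sm})$ via the normalization $\tilde C\to C$. You simply spell out in detail what the paper summarizes as ``looking at normalization of $C$, it is easy to see that the set of $L$ satisfying \eqref{eq:Csm} is a countable union of subvarieties of dimension $g-\tilde g$''; the countability comes from the discrete kernel of $\tilde J\to\Pic(C^{sm})$ that you identified.
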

\begin{proof}
By Corollary~\ref{co:2}, if $(L,F_1,F_2)\in Z$, then 
\begin{equation}\label{eq:Csm}
(L\otimes F_1)|_{C^{sm}}\simeq F_2^\vee|_{C^{sm}}.
\end{equation}
Looking at normalization of $C$, it is easy to see that the set of $L$ satisfying \eqref{eq:Csm} is a countable union of
subvarieties of dimension $g-\tilde g$. (In particular, it does not contain generic points of subschemes of higher dimension.)
\end{proof}

\begin{corollary}\label{co:bound} Suppose $C$ is singular, so $\tilde g<g$. Then $\dim(\supp(\Psi))<2g-\tilde g$.
\end{corollary}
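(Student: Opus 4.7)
The plan is a dimension-counting argument through the auxiliary space
\[
X := (\mu \times \id_\oJ)^{-1}(\supp(\Psi)) \subset J \times \oJ \times \oJ,
\]
following the template of similar arguments in \cite{Ar}. The morphism $\mu \times \id_\oJ : J \times \oJ \times \oJ \to \oJ \times \oJ$ is smooth and surjective with every geometric fiber isomorphic to $J$ (the fiber over $(G,H)$ is $\{(L, L^{-1} \otimes G, H) : L \in J\}$), so I obtain the clean identity
\[
\dim X = \dim \supp(\Psi) + g.
\]

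Next I would analyze the other projection $\pi : X \to \oJ \times \oJ$, $(L, F_1, F_2) \mapsto (F_1, F_2)$. Its fiber over a point $(F_1, F_2)$ is by construction the set $Z$ of Proposition~\ref{pp:3}, which has dimension at most $g - \tilde g$. This yields
\[
\dim X \le \dim(\oJ \times \oJ) + (g - \tilde g) = 3g - \tilde g,
\]
and combining with the previous identity gives the preliminary non-strict bound $\dim \supp(\Psi) \le 2g - \tilde g$.

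To promote this to a strict inequality under the assumption $\tilde g < g$, I would argue by contradiction. Equality $\dim \supp(\Psi) = 2g - \tilde g$ would force $\pi$ to be dominant with generic fiber of dimension exactly $g - \tilde g > 0$. Specializing to generic $(F_1, F_2) \in \oJ \times \oJ$ with $F_1 \in J$, we would therefore have a positive-dimensional family of $L \in J$ such that $(L \otimes F_1, F_2) \in \supp(\Psi)$. But Proposition~\ref{pp:1} applied to each such pair gives $P_{L \otimes F_1} \simeq P_{F_2}$ on $J$; combining with the Theorem of the Square (Lemma~\ref{lm:Square}) yields $P_L \simeq P_{F_2} \otimes P_{F_1}^{-1}$, a fixed line bundle on $J$. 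Thus $L$ is determined up to the kernel of the polarization map $L \mapsto P_L$, $J \to \Pic(J)$. Provided this map is generically finite, $L$ ranges over a finite set, contradicting positive fiber dimension.

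The main obstacle I anticipate is rigorously handling the polarization map in the possibly semi-abelian setting of a singular Jacobian: when $J$ has a non-trivial affine part, standard principal polarization arguments do not directly apply, and indeed for extreme cases (e.g., a nodal cubic where $J = \gm$ and $\Pic(J) = 0$) the map $L \mapsto P_L$ is trivial. In such cases the strict inequality must come instead from a finer analysis of Proposition~\ref{pp:3} itself, showing that the bound $g - \tilde g$ on the fiber of $\pi$ is not achieved on a dense open of $\oJ \times \oJ$ — one would exhibit $(F_1, F_2)$ for which the fiber is strictly smaller, ruling out the equality case and forcing $\dim \supp(\Psi) < 2g - \tilde g$.
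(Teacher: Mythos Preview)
Your setup matches the paper exactly: pass to $X=(\mu\times\id_\oJ)^{-1}(\supp\Psi)$, use that $\mu\times\id_\oJ$ is smooth surjective of relative dimension $g$, and bound the fibers of the projection $\pi:X\to\oJ\times\oJ$ by $g-\tilde g$ via Proposition~\ref{pp:3}. This gives the non-strict inequality $\dim\supp(\Psi)\le 2g-\tilde g$.

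The gap is in the passage to a strict inequality. Your polarization argument genuinely fails, and your own diagnosis is correct: when $J$ has a nontrivial affine part the map $L\mapsto P_L$ from $J$ to $\Pic(J)$ can have positive-dimensional kernel (for a nodal cubic $J=\gm$ and $\Pic(\gm)=0$, so the map is constant). Your fallback suggestion---that the maximal fiber dimension $g-\tilde g$ is not attained over a dense open---is exactly what is needed, but you do not supply the input that makes it work. The paper invokes \cite[Theorem~A]{Ar}, which implies that over $J\times J$ the fibers of $\pi$ are zero-dimensional (equivalently, $\supp(\Psi)\cap(J\times J)$ is the diagonal). With this in hand, split the base: over $J\times J$ the preimage has dimension at most $2g<3g-\tilde g$; over the complement, of dimension at most $2g-1$, the preimage has dimension at most $(2g-1)+(g-\tilde g)=3g-\tilde g-1$. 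Hence $\dim X<3g-\tilde g$, giving the strict bound. The point is that the zero-dimensional fiber statement over $J\times J$ is a nontrivial theorem from \cite{Ar} (essentially the full faithfulness of ${\mathfrak F}_J$), not something one can extract from Proposition~\ref{pp:1} alone.
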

\begin{proof} Since $\mu:J\to\oJ\to\oJ$ is smooth of relative dimension $g$, it suffices to show that 
\[\dim(\mu\times\id_\oJ)^{-1}(\supp(\Psi))<3g-\tilde g.\]
Proposition~\ref{pp:3} implies that the fibers of the projection $(\mu\times\id_\oJ)^{-1}(\supp(\Psi))\to\oJ\times\oJ$
have dimension at most $g-\tilde g$, while \cite[Theorem~A]{Ar} shows that over $J\times J$, the fibers are zero-dimensional.
\end{proof}

\subsection{Proof of Theorem~\ref{th:FM}}

\begin{lemma}\label{lm:Serre}
Let $X$ be a scheme of pure dimension. Suppose $\cG\in D^b_{coh}(X)$ satisfies the following conditions:
\begin{enumerate}
\item\label{lm:Serre1} $\codim(\supp(\cG))\ge d$;
\item\label{lm:Serre2} $H^i(\cG)=0$ for $i>0$;
\item\label{lm:Serre3} $H^i(\D\cG)=0$ for $i>d$.
\end{enumerate}
Then $\cG$ is a Cohen-Macaulay sheaf of codimension $d$.
\end{lemma}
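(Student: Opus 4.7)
The plan is to combine the hyper-Ext spectral sequence with Grothendieck--Serre biduality on $D^b_{coh}(X)$. The starting point is the spectral sequence
\[
E_2^{p,q} = \EXT^p(H^{-q}(\cG),\DC_X) \;\Rightarrow\; H^{p+q}(\D\cG),
\]
obtained from the canonical truncation filtration on $\cG$. Condition~\eqref{lm:Serre2} forces $E_2^{p,q}=0$ for $q<0$, and condition~\eqref{lm:Serre1}, combined with the basic Ext-vanishing recalled at the beginning of Section~\ref{sc:CM} applied to the coherent sheaf $H^{-q}(\cG)$ (whose support lies in $\supp(\cG)$ and therefore has codimension at least $d$), forces $E_2^{p,q}=0$ for $p<d$. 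Every surviving contribution thus lies in the quadrant $p\ge d$, $q\ge 0$, and in particular $H^n(\D\cG)=0$ for all $n<d$.

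Combining this with hypothesis~\eqref{lm:Serre3}, the complex $\D\cG$ has cohomology only in the single degree $d$, so $\D\cG \simeq N[-d]$ for a coherent sheaf $N$, and $\supp(N)\subseteq\supp(\cG)$. Applying biduality, I obtain
\[
\cG \;\simeq\; \D\D\cG \;=\; (\D N)[d],
\]
so $H^i(\cG) \simeq H^{i+d}(\D N)$. The support-codimension bound on $N$ gives $H^j(\D N)=0$ for $j<d$, i.e.\ $H^i(\cG)=0$ for $i<0$, while condition~\eqref{lm:Serre2} gives $H^j(\D N)=0$ for $j>d$. Thus $\D N$ is concentrated in degree $d$, which is exactly the defining condition that $N$ be a Cohen--Macaulay sheaf of codimension $d$; dually, $\cG=(\D N)[d]$ is itself a Cohen--Macaulay coherent sheaf of codimension $d$ sitting in degree zero.

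There is no serious obstacle: the argument is a direct two-step application of duality. The one edge case to keep in mind is that the support of $\cG$ might have codimension strictly greater than $d$; but then the same spectral sequence in fact gives $H^n(\D\cG)=0$ for all $n<\codim(\supp(\cG))$, and combined with~\eqref{lm:Serre3} this forces $\D\cG=0$, hence $\cG=0$, which is trivially Cohen--Macaulay of every codimension. So the only genuine case to handle is $\codim(\supp(\cG))=d$, where the spectral-sequence-plus-biduality argument above runs exactly as written.
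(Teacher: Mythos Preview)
Your argument is correct. The spectral sequence gives the lower bound $H^n(\D\cG)=0$ for $n<d$, hypothesis~\eqref{lm:Serre3} gives the upper bound, and then biduality lets you run the same bounds on $N=H^d(\D\cG)$ to conclude that $\cG$ is a sheaf with $\D\cG[d]$ a sheaf. The edge case $\codim(\supp\cG)>d$ is handled correctly.

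The paper's proof reaches the same conclusion through the language of perverse coherent $t$-structures (in the sense of \cite{AB}): condition~\eqref{lm:Serre3} reads as $\D\cG\in D^{p,\le0}$ for the constant perversity $p=d$, hence $\cG\in D^{\overline p,\ge0}$ for the dual perversity $\overline p(x)=\codim(\overline{\{x\}})-d$, and then~\eqref{lm:Serre1} forces $\cG\in D^{\ge0}$; symmetry under $\cG\leftrightarrow\D\cG[d]$ finishes. Your hyper-Ext spectral sequence is exactly what underlies the inclusion $D^{\overline p,\ge0}\cap\{\codim\supp\ge d\}\subset D^{\ge0}$, so the two proofs have the same mathematical content. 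The difference is packaging: the paper's version is shorter and more conceptual once one knows the formalism, while yours is self-contained and avoids importing the $t$-structure machinery for what is, in the end, a two-step duality argument.
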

\begin{proof} The proof is naturally given in the language of perverse coherent sheaves (\cite{AB}). Indeed, \eqref{lm:Serre3}
claims that $\D\cG\in D^{p,\le0}(X)$ for perversity $p:|X|\to\Z$ given by $p(x)=d$. Here $|X|$ is the set of 
(not necessarily closed) points of $X$.
Therefore, $\cG\in D^{\overline{p},\ge0}(X)$ for the dual perversity 
\[\overline{p}:|X|\to\Z:p(x)=\codim(\overline{\{x\}})-d.\]
Now \eqref{lm:Serre1} implies that $\cG\in D^{\ge 0}(X)$, and \eqref{lm:Serre2} implies that $\cG$ is a sheaf. Since
$\D\cG[d]$ also satisfies conditions \eqref{lm:Serre1}--\eqref{lm:Serre3}, it is also a sheaf, and therefore $\cG$ is 
Cohen-Macaulay of codimension $d$.
\end{proof} 

Note that the statement of Lemma~\ref{lm:Serre} is local in smooth topology on $X$; therefore, the lemma still holds
if $X$ is an algebraic stacks (locally of finite type over $\kkk$).

\begin{proof}[Proof of Proposition~\ref{pp:FM}]
Both $\oP$ and $\Psi$ are defined for families of curves with plane singularities. Let us consider the universal family.
Let $\cM$ be the moduli stack of (reduced irreducible projective) curves of fixed arithmetic genus $g$ with 
plane singularities. Let $\cC\to\cM$ be the universal curve, and let $\overline\cJ$ 
(resp. $\cJ\subset\overline\cJ$) be the relative compactified Jacobian (resp. the relative Jacobian)
of $\cC$ over $\cM$. The properties of these objects are summarized in
\cite{Ar}. As $C\in\cM$ varies, the family of Poincar\'e sheaves gives a Cohen-Macaulay sheaf $\oP_{univ}$ on $\overline\cJ\times_\cM\overline\cJ$; similarly, $\Psi$ is naturally defined as an object of the derived category
\[\Psi_{univ}\in D^b(\overline\cJ\times_\cM\overline\cJ).\]
The restriction of $\Psi_{univ}$ to the fiber over a particular curve $C\in\cM$ is $\Psi$.

Denote by $\frj$ the rank $g$ vector bundle on $\cM$ whose fiber over $C\in\cM$ is $H^1(C,O_C)$. Alternatively, $\frj$
can be viewed as the bundle of (commutative) Lie algebras corresponding to the group scheme $\cJ\to\cM$. 
Denote the projection $\overline\cJ\times_\cM\overline\cJ\to\cM$ by $\pi$ and diagonal in 
$\overline\cJ\times_\cM\overline\cJ$ by $\Delta$. 
Our goal is to prove that
\begin{equation}\label{eq:Psi}
\Psi_{univ}[g]\simeq O_\Delta\otimes\pi^*\det(\frj).
\end{equation}

Consider on $\cM$ the stratification $\cM^{(\tilde g)}$, where $\cM^{(\tilde g)}$ parametrizes curves of geometric
genus $\tilde g$. By \cite[Proposition~6]{Ar}, $\codim(\cM^{(\tilde g)})\ge g-\tilde g$. Now Corollary~\ref{co:bound}
implies that $\dim(\supp(\Psi))=\dim(\cM)+g$, and moreover, every maximal-dimensional component of $\supp(\Psi)$
meets $\pi^{-1}(\cM^{(g)})$. Since $\cM^{(g)}$ parametrizes smooth curves, 
\[\supp(\Psi)\cap\pi^{-1}(\cM^{(g)})=\Delta\cap\pi^{-1}(\cM^{(g)}).\]
This is a reformulation of Mumford's result \cite[Section II.8.(vii)]{Mumford}.

Finally, $H^i(\Psi_{univ})=0$ for $i>g$ and Serre's duality implies that $H^i(\D\Psi_{univ})=0$ for $i>0$. 
(For instance,  we have
\[\D\Psi\simeq p_{21}^*\Psi[g]\]
over a fixed curve $C\in\cM$.)
Thus, Lemma~\ref{lm:Serre} shows that $\Psi_{univ}[g]$ is a Cohen-Macaulay sheaf of codimension $g$. Outside of a set of 
codimension $g+1$, we see that $\supp(\Psi_{univ})$ coincides with $\Delta$, therefore, 
\[\supp(\Psi_{univ})\subset\Delta.\]
Also, \cite[Theorem~10]{Ar} provides the required isomorphism \eqref{eq:Psi}
over $\cJ\times_\cM\cJ$. Thus both sides of \eqref{eq:Psi} are Cohen-Macaulay sheaves on $\Delta$ that are isomorphic outside
of a subset of codimension two. By Lemma~\ref{lm:CMExt}, they are isomorphic.
\end{proof}

As we have seen, Proposition~\ref{pp:FM} implies Theorem~\ref{th:FM}.

\subsection{Autoduality of the compactified Jacobian}
It remains to prove Theorem~\ref{th:autoduality}. As we already mentioned, the first statement follows from Theorem~\ref{th:oP}
and the results of \cite{compactified}. On the other hand, both statements easily follow from Theorem~\ref{th:FM}.

Note that $\Pic(\oJ)^=$ is not claimed to be a fine moduli space; that is, there may be no universal family of torsion-free 
sheaves on $\oJ$ of generic rank one parametrized by $\Pic(\oJ)^=$. However, locally in the \'etale topology of $\Pic(\oJ)^=$, 
such a family exists and is unique (\cite[Theorem~3.1]{CP2}). In particular, points of $\Pic(\oJ)^=$ are in bijection with
isomorphism classes of torsion-free sheaves on $\oJ$ of generic rank one. We will make no distinction between these two objects,
so that $M\in\Pic(\oJ)^=$ means ``$M$ is a torsion-free sheaf on $\oJ$ of generic rank one, defined up to 
non-canonical isomorphism''.

\begin{proof}[Proof of Theorem~\ref{th:autoduality}]
Given $M\in \Pic(\oJ)^=$, consider its Fourier-Mukai transform $\FF(M)$. Its cohomology sheaves
are concentrated in degrees between $0$ and $g$. Fix an ample line bundle on $\oJ$, and denote by $h^i(\FF(M))\in\Q[t]$ the
Hilbert polynomial of the cohomology sheaf $H^i(\FF(M))$ for $0\le i\le g$.

Proposition~\ref{pp:FM} implies that if $M=\rho(F)=\oP_F$ for $F\in\oJ$, then $\FF(M)\simeq O_{F^\vee}[-g]$. Here $O_{F^\vee}$ 
is the structure sheaf of the point $F^\vee\in\oJ$. In particular,
\begin{equation}\label{eq:FMvanishing}
h^i(\FF(M))=\begin{cases}1,& i=g\\0,& i\ne g\end{cases}.
\end{equation}
On the other hand, consider $h^i(\FF(M))$ as functions of $M\in\Pic(\oJ)^=$. 
They are semicontinuous with respect to the order on $\Q[t]$
given by
\[f>g\quad\text{if}\quad f(t)>g(t)\quad\text{for $t\gg0$}\quad(f,g\in\Q[t]).\]
Therefore, \eqref{eq:FMvanishing} also holds for $M$ in a neighborhood of $\rho(\oJ)\subset\Pic(\oJ)^=$.

However, if $M$ satisfies \eqref{eq:FMvanishing}, then $\FF(M)\simeq O_F[-g]$ for some point $F\in\oJ$. Therefore,
$\FF(M)\simeq\FF(\oP_{F^\vee})$, and Theorem~\ref{th:FM} implies $M\simeq\oP_{F^\vee}$. Hence $\rho(\oJ)$ is a connected component
of $\Pic(\oJ)^=$.

We also see that the inverse of the map $\rho:\oJ\to\rho(\oJ)$ is given by 
\[M\mapsto\nu(\supp(\FF(M))):\rho(\oJ)\to\oJ.\]
Clearly, this is an algebraic map. This completes the proof.
\end{proof} 

\bibliographystyle{abbrv}
\bibliography{cjacobians}
\end{document}